\DeclareMathOperator{\Cr}{Cr}
\DeclareMathOperator{\Spec}{Spec}
\DeclareMathOperator{\Aut}{Aut}
\DeclareMathOperator{\J}{J}
\DeclareMathOperator{\PGL}{PGL}
\DeclareMathOperator{\GL}{GL}
\DeclareMathOperator{\SL}{SL}
\DeclareMathOperator{\PSL}{PSL}
\DeclareMathOperator{\OO}{O}
\DeclareMathOperator{\Cyc}{C}
\DeclareMathOperator{\PO}{PO}
\DeclareMathOperator{\Bir}{Bir}
\DeclareMathOperator{\Heis}{\mathcal H}
\DeclareMathOperator{\Quad}{Q}
\theoremstyle{plain}
\newtheorem{thm}{Theorem}[section]
\newtheorem{lem}[thm]{Lemma} 
\newtheorem{sled}[thm]{Corollary} 
\newtheorem{prop}[thm]{Proposition}
\theoremstyle{definition}
\newtheorem{mydef}[thm]{Definition} 
\newtheorem{rem}[thm]{Remark}
\g@addto@macro{\endabstract}{\@setabstract}
\newcommand{\authorfootnotes}{\renewcommand\thefootnote{\@fnsymbol\c@footnote}}%
\begin{document}
	
	\begin{center}
		\LARGE 
		The Jordan Constant For Cremona Group of Rank 2 \par \bigskip
		
		\normalsize
		\authorfootnotes
		\large Egor Yasinsky\footnote{yasinskyegor@gmail.com\\ {\it Keywords:} Cremona group, Jordan constant, conic bundle, del Pezzo surface, automorphism group.} \par \bigskip
		\normalsize
		Steklov Mathematical Institute of Russian Academy of Sciences \\8 Gubkina st., Moscow, Russia, 119991\par \bigskip
		
	\end{center}

\newcommand{\CC}{\mathbb C} 
\newcommand{\FF}{\mathbb F} 
\newcommand{\RR}{\mathbb R}
\newcommand{\QQ}{\mathbb Q}
\newcommand{\ZZ}{\mathbb Z}
\newcommand{\PP}{\mathbb P}
\newcommand{\kk}{\mathbb k}
\newcommand{\Sph}{\mathbb S}
\newcommand{\Torus}{\mathbb T}
\newcommand{\Sym}{\mathfrak S}
\newcommand{\Alt}{\mathfrak A}
\newcommand{\Dih}{\mathrm D}
\newcommand{\XC}{X_{\mathbb C}}
\newcommand{\Weyl}{\mathscr W}
\newcommand{\id}{\mathrm{id}}
\newcommand{\Inv}{\Phi}

	\newcommand*\conj[1]{\overline{#1}}
	
	\begin{abstract}
		We compute the Jordan constant for the group of birational automorphisms of a projective plane $\mathbb{P}^2_{\kk}$, where $\kk$ is either an algebraically closed field of characteristic 0, or the field of real numbers, or the field of rational numbers.
	\end{abstract}
	
\section{Introduction}

\subsection{Jordan property}

Unless stated otherwise, $\kk$ denotes an algebraically closed field of characteristic zero. We start with the main definition of this article, first introduced by V. L. Popov in \cite[Definition 2.1]{Pop-Makar} (see also \cite[Definition 1]{Pop-Diff}).

\begin{mydef}
	A group $\Gamma$ is called {\it Jordan} (we also say that $\Gamma$ has {\it Jordan property}) if there exists a positive integer $m$ such that every finite subgroup $G\subset \Gamma$ contains a normal abelian subgroup $A\triangleleft G$ of index at most $m$. The minimal such $m$ is called the {\it Jordan constant} of $\Gamma$ and is denoted by $\J(\Gamma)$.
\end{mydef}

Informally, this means that all finite subgroups of $\Gamma$ are ``almost'' abelian. The name of $\J(\Gamma)$ (and the corresponding property) is justified by the classical theorem of Camille Jordan (for a modern exposition see e.g. \cite[Theorem 36.13]{CR}).

\begin{thm}[C. Jordan, 1878]\label{thm: Jordan thm}
	The group $\GL_n(\kk)$ is Jordan for every $n$.
\end{thm}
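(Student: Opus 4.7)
The plan is to reduce to the classical analytic setting and then exploit compactness of the unitary group. I would proceed in three steps, with most of the work in the third.

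\textbf{Step 1 (reduction to $\CC$).} A finite subgroup $G\subset\GL_n(\kk)$ is generated by finitely many matrices, whose entries lie in a finitely generated $\QQ$-subalgebra $R\subset\kk$. Since $\kk$ is algebraically closed of characteristic zero, $R$ embeds into $\CC$, so $G$ is conjugate (via a change of basis over $\kk$) to a finite subgroup of $\GL_n(\CC)$. Any bound on the Jordan constant obtained over $\CC$ therefore suffices; in particular $\J(\GL_n(\kk))\leq \J(\GL_n(\CC))$.

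\textbf{Step 2 (averaging).} Given a finite subgroup $G\subset\GL_n(\CC)$, average the standard Hermitian inner product over $G$ to obtain a $G$-invariant Hermitian form. With respect to an orthonormal basis for this form, $G$ sits inside the unitary group $U(n)$. Thus it is enough to bound $[G:A]$ for $A\triangleleft G$ abelian, uniformly over all finite $G\subset U(n)$.

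\textbf{Step 3 (the neighborhood-of-identity argument, the main point).} Using the exponential map, choose a conjugation-invariant open neighborhood $V$ of the identity in $U(n)$ on which $\exp$ is a diffeomorphism and such that for any $x,y\in V$ the commutator $[x,y]=xyx^{-1}y^{-1}$ lies in a strictly smaller neighborhood $V'\subset V$ (this follows from the Baker-Campbell-Hausdorff formula: commutators of elements close to $1$ are quadratically closer to $1$). Now let
\[
A := \langle G\cap V\rangle.
\]
The normality of $A$ in $G$ is immediate from the conjugation-invariance of $V$. The key claim is that $A$ is abelian: if $x,y\in G\cap V$, then iterating the commutator contraction shows $[x,y]^{(k)}\to 1$, but $[x,y]^{(k)}$ lies in the finite group $G$, so eventually $[x,y]=1$; a standard induction then extends commutativity to the whole subgroup generated by $G\cap V$. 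Finally, $[G:A]\leq [G:G\cap V]$ is bounded by the number of translates of $V$ needed to cover $U(n)$, a purely geometric quantity depending only on $n$ (by compactness of $U(n)$ together with a volume/packing estimate). This produces the desired $m=m(n)$.

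The genuinely delicate step is the third one: one must choose $V$ carefully so that simultaneously (i) $V$ is invariant under conjugation by all of $U(n)$, (ii) $V$ is small enough that the commutator map contracts into $V'\subsetneq V$, and (iii) $V$ is large enough that finitely many translates cover $U(n)$, with the covering number depending only on $n$. Once (i)--(iii) are arranged, the verification that $A$ is an abelian normal subgroup of index at most this covering number is essentially formal, and it yields not only Jordan's theorem but an explicit (though far from optimal) bound on $\J(\GL_n(\kk))$.
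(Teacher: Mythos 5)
A preliminary remark: the paper does not prove this statement at all --- it is Jordan's classical 1878 theorem, quoted with a pointer to Curtis--Reiner [Theorem 36.13] --- so your proposal can only be measured against the classical argument, which is indeed the unitarization-plus-small-neighborhood strategy you outline. Your Steps 1 and 2 are fine, as are the normality of $A=\langle G\cap V\rangle$ (conjugation-invariance of operator-norm balls in $\mathrm{U}(n)$) and the bound $[G:A]\leqslant$ covering number of $\mathrm{U}(n)$ by translates of $V$.

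The genuine gap is the claim that $A$ is abelian. The contraction estimate $\|[x,y]-1\|\leqslant 2\|x-1\|\cdot\|y-1\|$ does show that the iterated commutators $z_1=[x,y]$, $z_{k+1}=[x,z_k]$ tend to $1$, and finiteness of $G$ (hence a positive minimal distance from $1$ to $G\setminus\{1\}$) forces $z_k=1$ for all large $k$. But that does \emph{not} give $[x,y]=1$: if $k\geqslant 2$ is minimal with $z_k=1$, all you learn is that $x$ commutes with the nontrivial element $z_{k-1}$, which is no contradiction, and the vanishing does not propagate back to $z_1$. So the sentence ``but $[x,y]^{(k)}$ lies in the finite group $G$, so eventually $[x,y]=1$'' is a non sequitur, and the subsequent ``standard induction'' has nothing to start from. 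This is exactly the delicate point in every correct proof, and it requires an extra idea: e.g.\ a Frobenius/Blichfeldt-type eigenvalue argument (an element of a finite linear group whose eigenvalues lie in a short arc commutes with its conjugates), or a minimal-choice argument among non-commuting pairs combined with structural input, or an induction on $n$ using the eigenspace decomposition of the first nontrivial $z_{k-1}$ (non-scalar once $V$ is small enough to exclude nontrivial determinant-one scalars), or the Zassenhaus--Margulis lemma giving nilpotency and then abelianness inside a maximal torus. With such an ingredient supplied, the rest of your scheme --- reduction from $\kk$ to $\CC$ to $\mathrm{U}(n)$, normality, and the covering estimate --- does yield Jordan's theorem with an explicit (far from optimal) constant; as written, however, the key commutativity step is unproved.
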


Since any subgroup of a Jordan group is obviously Jordan, Theorem \ref{thm: Jordan thm} implies that every linear algebraic group over $\kk$ is Jordan. In recent years the Jordan property has been studied for groups of birational automorphisms of algebraic varieties. The first significant result in this direction belongs to J.-P. Serre. Before stating it, let us recall that the {\it Cremona group} $\Cr_n(\kk)$ of rank $n$ is the group of birational automorphisms of a projective space $\mathbb{P}^n_{\kk}$ (or, equivalently, the group of $\kk$-automorphisms of the field $\kk(x_1,\ldots, x_n)$ of rational functions in $n$ independent variables). Note that $\Cr_1(\kk)\cong\PGL_2(\kk)$ is linear and hence is Jordan, but the group $\Cr_2(\kk)$ is already ``very far'' from being linear. However, the following holds.

\begin{thm}[J.-P. Serre {\cite[Theorem 5.3]{serre1},\cite[Th\'{e}or\`{e}me 3.1]{serre2}}]\label{thm: Cr2 is Jordan}
	The Cremona group $\Cr_2(\kk)$ over a field $\kk$ of characteristic 0 is Jordan.
\end{thm}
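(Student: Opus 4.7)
The plan is to reduce the problem to the equivariant geometry of rational surfaces via a $G$-equivariant Minimal Model Program. Let $G\subset\Cr_2(\kk)$ be a finite subgroup. The first step is a regularisation: by a standard resolution-of-indeterminacies argument together with equivariant resolution of singularities, one finds a smooth projective surface $X$ birational to $\PP^2_{\kk}$ on which $G$ acts faithfully and biregularly. The second step is to run the $G$-equivariant MMP on $X$, contracting $G$-orbits of $(-1)$-curves (Castelnuovo--Enriques, in the form worked out by Manin and Iskovskikh), until one arrives at a $G$-Mori fibre space. In dimension two this leaves exactly two cases:
\begin{itemize}
\item[(dP)] $X$ is a del Pezzo surface with $\rk\Pic(X)^G=1$;
\item[(CB)] $X$ admits a $G$-equivariant conic bundle structure $\pi\colon X\to\PP^1_{\kk}$ with $\rk\Pic(X)^G=2$.
\end{itemize}

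In case (dP), the surface $X$ is Fano, so $-K_X$ is ample and embeds $X$ into some $\PP^N_{\kk}$ with $\Aut(X)$ a closed subgroup of $\PGL_{N+1}(\kk)$; in particular $\Aut(X)$ is a linear algebraic group. By Jordan's classical theorem (Theorem \ref{thm: Jordan thm}) every finite subgroup of a linear algebraic group over $\kk$ is Jordan, with a bound depending only on $\dim\Aut(X)$ and the Weyl-type discrete quotient acting on $\Pic(X)$. Since del Pezzo surfaces are stratified by the degree $K_X^2\in\{1,\ldots,9\}$, both parameters stay bounded and one obtains a uniform Jordan constant independent of $G$ and $X$.

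In case (CB), the fibration $\pi$ gives an exact sequence
\[
1\longrightarrow G_F\longrightarrow G\longrightarrow G_B\longrightarrow 1,
\]
where $G_B\subset\Aut(\PP^1_{\kk})=\PGL_2(\kk)$ and $G_F$ acts trivially on the base. Restricting to the generic fibre realises $G_F$ as a finite subgroup of $\PGL_2(\kk(t))$. Both $\PGL_2(\kk)$ and $\PGL_2(\kk(t))$ are linear over some field of characteristic zero, hence Jordan by Theorem \ref{thm: Jordan thm}. The final task is to upgrade this to Jordan-ness of the extension with a uniform bound. Here one invokes the explicit classification of finite subgroups of $\PGL_2$ over a field of characteristic $0$ (cyclic, dihedral, $\Alt_4$, $\Sym_4$, $\Alt_5$) to extract inside $G_F$ a \emph{characteristic} cyclic subgroup $A\triangleleft G_F$ of index bounded by $60$. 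Because $A$ is characteristic in $G_F$ and $G_F\triangleleft G$, it is normal in $G$, and $G/A$ fits in a short exact sequence with kernel of order $\le 60$ and quotient a finite subgroup of $\PGL_2(\kk)$; one more application of the classification to $G_B$ yields the desired normal abelian subgroup of $G$ of index bounded by an absolute constant.

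The main obstacle is the conic bundle case: two Jordan groups glued in an extension are not automatically Jordan, so one cannot simply combine the estimates for $G_F$ and $G_B$ formally. The fix is geometric/group-theoretic, and relies crucially on the fact that the finite subgroups of $\PGL_2$ in characteristic zero admit a characteristic abelian subgroup of bounded index; only this refinement guarantees that the abelian subgroup chosen inside $G_F$ survives as a normal subgroup of the whole group $G$, which is what a Jordan bound for the extension requires.
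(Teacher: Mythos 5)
Your overall strategy (regularise $G$, run the $G$-equivariant MMP, and treat separately the del Pezzo and conic bundle outcomes) is exactly the skeleton that Serre's proof and Section~3 of this paper follow, and your del Pezzo argument is essentially sound: a bounded multiple of $-K_X$ (note $-K_X$ itself is not very ample for degrees $1,2$) embeds $X$ in a projective space of uniformly bounded dimension, so $\mathrm{Aut}(X)$ sits in a fixed $\mathrm{PGL}_{N+1}$ and Jordan's theorem gives a uniform constant. The gap is in the conic bundle case. Your key claim is that a \emph{characteristic} abelian subgroup $A\subset G_F$ of index at most $60$ suffices, because it ``survives as a normal subgroup of $G$.'' Normality is indeed automatic, but normality is not the issue: in the unbounded situation where $G_F\cong\mathbb{Z}/n$ and $G_B\cong\mathbb{Z}/m$ (or dihedral) with $n,m$ arbitrarily large, your $A$ (here $A=G_F$) has index $|G_B|\cdot[G_F:A]$ in $G$, which is unbounded, and passing to $G/A$ does not help, since an extension of an abelian group by an abelian group need not contain an abelian subgroup of bounded index at all. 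A purely group-theoretic argument cannot close this: metacyclic groups such as $\mathbb{Z}/p\rtimes\mathbb{Z}/(p-1)$ with faithful action have no abelian subgroup of index bounded independently of $p$, so ``classification of finite subgroups of $\mathrm{PGL}_2$ for $G_F$ and $G_B$ plus normality'' is compatible with $G$ failing any uniform Jordan bound. You must rule such extensions out geometrically.

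The missing ingredient is precisely Lemma~\ref{lem: serr-lemma} (Serre's Lemma 5.2): if $g\in G$ normalizes the cyclic group $\langle h\rangle\subset G_F$, then $ghg^{-1}=h^{\pm1}$, because the pair of eigenvalues $\{\lambda,\lambda^{-1}\}$ of $h$ at its two fixed points on the generic fibre is canonically attached to $h$. This forces $g^2$ to centralize $h$, so choosing $g$ lifting a generator of a cyclic subgroup of index at most $2$ in $G_B$, the subgroup $\langle g^2,h\rangle$ is abelian of uniformly bounded index in $G$; one then gets a \emph{normal} abelian subgroup of bounded index either by hand or via the Chermak--Delgado theorem (Theorem~\ref{rem: Chermak-Delgado}), which is how case (iv) of Proposition~\ref{prop: CB} proceeds. (The mixed case where $G_F$ is one of $\Alt_4,\Sym_4,\Alt_5$ and $G_B$ is large cyclic or dihedral also needs a word: there one takes the cyclic group $\langle g\rangle$, of bounded index in $(G_F)_\bullet\langle\varphi(g)\rangle$, and again applies Chermak--Delgado, as in case (iii).) With this lemma inserted, your outline becomes the argument of Proposition~\ref{prop: CB}; without it, the conic bundle case is genuinely open in your write-up.
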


A far-going generalization of Theorem \ref{thm: Cr2 is Jordan} was recently proved by Yu. Prokhorov and C. Shramov. 

\begin{thm}\cite[Theorem 1.8]{PS-BAB}\label{thm: PS-BAB}
	For every positive integer $n$, there exists a constant $I=I(n)$ such that for any rationally connected variety $X$ of dimension $n$ defined	over an arbitrary field $\kk$ of characteristic 0 and for any finite subgroup $G\subset \Bir(X)$ there exists a normal abelian subgroup $A\subset G$ of index at most $I$.
\end{thm}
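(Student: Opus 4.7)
The plan is to reduce the problem, via the $G$-equivariant Minimal Model Program, to a boundedness statement for Fano varieties. Since any finite subgroup of $\Bir(X)$ embeds into $\Bir(X_{\bar\kk})$, it suffices to treat the case when $\kk$ is algebraically closed. By equivariant resolution of singularities I may replace $X$ with a smooth projective model on which $G$ acts regularly; then the $G$-equivariant MMP produces a $G$-equivariant Mori fibre space $\varphi\colon X'\to S$, where $X'$ has terminal $\QQ$-factorial singularities and $-K_{X'}$ is $\varphi$-ample. Because $X$ is rationally connected, so are $X'$, the base $S$, and the generic fibre $X'_\eta$, the latter two of dimension strictly less than $n$.

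I would then split into two cases. If $\dim S = 0$, then $X'$ is a $\QQ$-factorial Fano variety of dimension $n$ with terminal singularities. By the BAB conjecture, now a theorem of Birkar, such varieties form a bounded family, so there exist $m=m(n)$ and $N=N(n)$ with $|{-mK_{X'}}|$ yielding an embedding $X'\hookrightarrow\PP^{N}$. Hence $\Aut(X')$ is a closed subgroup of $\PGL_{N+1}(\kk)$, so linear algebraic, and an application of Theorem~\ref{thm: Jordan thm} inside $\GL_{N+1}(\kk)$ produces a Jordan constant depending only on $n$.

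If $\dim S>0$, I proceed by induction on $n$. The fibration gives a short exact sequence
\[
1\longrightarrow G_F\longrightarrow G\longrightarrow G_S\longrightarrow 1,
\]
in which $G_S\subset\Bir(S)$ and $G_F$ acts faithfully on the rationally connected generic fibre $X'_\eta$, a variety of dimension $<n$ over $\kk(S)$. By the inductive hypothesis both $G_S$ and $G_F$ contain normal abelian subgroups of index bounded by a function of $n-1$. A standard group-theoretic device, replacing a normal abelian $A\triangleleft G_F$ by the intersection of all its $G$-conjugates inside $G_F$ (still abelian and of controlled index, now normal in $G$) and then pulling back a normal abelian subgroup of $G_S$ through the quotient, assembles these into a normal abelian subgroup of $G$ of index bounded in terms of $n$.

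The principal obstacle is the boundedness input in the Fano case: extracting a uniform constant $I(n)$ requires the full strength of BAB to rule out increasingly degenerate families of terminal Fano $n$-folds, and only relatively recently has this become unconditional via Birkar's work. Secondary technical points are that the MMP must be run carefully $G$-equivariantly on a singular model, and that the group-theoretic assembly in the inductive step must keep all indices uniformly bounded in terms of $n$ alone, independent of the particular $G$, $S$, and $X'$ that arise.
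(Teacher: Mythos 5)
Note first that the paper does not prove this statement at all: it is imported verbatim from Prokhorov and Shramov \cite{PS-BAB}, so your sketch can only be measured against their argument. Your skeleton is the right one — reduce to a $G$-equivariant Mori fibre space, handle $\dim S=0$ by BAB-boundedness plus Theorem \ref{thm: Jordan thm}, and for $\dim S>0$ induct through the exact sequence $1\to G_F\to G\to G_S\to 1$, using that the theorem is stated over arbitrary fields of characteristic $0$ precisely so it applies to the generic fibre over $\kk(S)$. The Fano case as you describe it is essentially complete.

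The gap is in the inductive assembly, and it is exactly the crux of the theorem. A smaller issue: the intersection of all $G$-conjugates of a normal abelian $A\subset G_F$ of index $\leqslant m$ does \emph{not} have controlled index, since the number of conjugates is unbounded; this can be repaired by Chermak--Delgado (Theorem \ref{rem: Chermak-Delgado}), which yields a characteristic abelian subgroup of $G_F$ of index $\leqslant m^2$, automatically normal in $G$. The fatal issue is the next step: even with such an $A\triangleleft G$, pulling back a normal abelian subgroup of $G_S$ only produces a subgroup of bounded index in $G$ that is an extension of an abelian group by an abelian group, i.e.\ metabelian — not abelian, and in general not close to abelian, because the Jordan property is not stable under extensions. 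For instance, the Heisenberg group $\Heis_3(p)$ of unipotent $3\times 3$ matrices over $\FF_p$ sits in an exact sequence $1\to\ZZ/p\to\Heis_3(p)\to(\ZZ/p)^2\to 1$ with abelian kernel and abelian quotient, yet every abelian subgroup of it has index at least $p$, which is unbounded as $p$ grows. So your inductive step produces no constant $I(n)$ independent of $G$. Prokhorov and Shramov get around precisely this obstruction by proving a stronger statement by induction: the abelian subgroups occurring in the rationally connected setting can be chosen of rank bounded in terms of $n$ (via fixed-point arguments — an abelian group fixing a smooth point acts faithfully on the tangent space, hence embeds into $\GL_n(\kk)$), and they combine this with a nontrivial group-theoretic result on extensions whose kernel is almost abelian of bounded rank. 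Without such a strengthening of the induction hypothesis, the fibration case of your argument does not close.
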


Note that this theorem states not only that $\Bir(X)$ are Jordan, but also that the corresponding constant may be chosen uniformly for all rationally connected $X$ of a fixed dimension.

\begin{sled}
	The group $\Cr_n(\kk)$ is Jordan for each $n\geqslant 1$.
\end{sled}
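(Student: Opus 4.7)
The plan is to derive the corollary as an immediate specialization of Theorem \ref{thm: PS-BAB}. By definition, $\Cr_n(\kk) = \Bir(\mathbb{P}^n_{\kk})$, so the statement reduces to showing that $\Bir(\mathbb{P}^n_{\kk})$ is Jordan for every $n\geqslant 1$. To invoke Theorem \ref{thm: PS-BAB}, I only need to verify that the projective space $\mathbb{P}^n_{\kk}$ falls under its hypotheses, i.e. that it is a rationally connected variety of dimension $n$ defined over a field of characteristic $0$.

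Rational connectedness is immediate: $\mathbb{P}^n_{\kk}$ is rational (the identity gives a birational map to itself), and rational varieties are rationally connected since any two general points lie on the image of a rational curve (e.g. a line). The dimension condition and the hypothesis on $\kk$ are part of our standing assumptions. Therefore Theorem \ref{thm: PS-BAB} provides a constant $I = I(n)$ such that every finite subgroup $G\subset \Cr_n(\kk) = \Bir(\mathbb{P}^n_{\kk})$ contains a normal abelian subgroup $A\triangleleft G$ of index at most $I$. This is exactly the Jordan property, with $\J(\Cr_n(\kk))\leqslant I(n)$.

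There is essentially no obstacle here: the work has all been done in Theorem \ref{thm: PS-BAB}, and the corollary is just the observation that $\mathbb{P}^n_{\kk}$ is one of the varieties covered by that theorem. The only small point worth mentioning in the write-up is that the bound $I(n)$ is uniform in $\kk$, which is a feature of the Prokhorov--Shramov theorem rather than of the corollary itself.
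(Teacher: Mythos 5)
Your proposal is correct and is exactly the deduction the paper intends: $\mathbb{P}^n_{\kk}$ is rationally connected (any two points lie on a line), so Theorem \ref{thm: PS-BAB} applies directly to $\Bir(\mathbb{P}^n_{\kk})=\Cr_n(\kk)$ and yields the Jordan property with constant $I(n)$. Nothing is missing.
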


\begin{rem}
	Theorem \ref{thm: PS-BAB} was initially proved modulo so-called Borisov-Alexeev-Borisov conjecture, which states that for a given positive integer $n$, Fano varieties of dimension $n$ with terminal singularities are bounded, i. e. are contained in a finite number of algebraic families. This conjecture was settled in dimensions $\leqslant 3$ a long time ago, but in its full generality was proved only recently in a preprint of Caucher Birkar \cite{BAB}.
\end{rem}

\subsection{Jordan constant} So far we discussed only the Jordan property itself. Of course, after establishing that a given group is Jordan, the next natural question is to estimate its Jordan constant. This can be highly non-trivial: the precise values of $\J(\GL_n(\kk))$ for all $n$ were found only in 2007 by M. J. Collins \cite{J for Gl}. As for Cremona groups, in \cite{serre1} a {\it ``multiplicative''} upper bound for $\J(\Cr_2(\kk))$ is given. Specifically, it was shown that every finite subgroup $G\subset\Cr_2(\kk)$ contains a normal abelian subgroup $A$ with $[G:A]$ dividing $2^{10}\cdot 3^4\cdot 5^2\cdot 7$. 

For any group $\Gamma$ one can consider a closely related constant $\overline{\J}(\Gamma)$, which is called a {\it weak Jordan constant} in \cite{PS-J3}. By definition, it is the minimal number $m$ such that for any finite subgroup $G\subset \Gamma$ there exists a {\it not necessarily normal} abelian subgroup $A\subset G$ of index at most $m$. The following beautiful fact highlights the connection between $\J(\Gamma)$ and $\overline{\J}(\Gamma)$ (see e.g. \cite[Theorem 1.41]{Isaacs}).

\begin{thm}[Chermak-Delgado]\label{rem: Chermak-Delgado}
	If $G$ is a finite group and $A$ is its abelian subgroup, then one can find a characteristic abelian subgroup $N$ of G such that
	\[
	[G:N]\leqslant [G:A]^2.
	\]
\end{thm}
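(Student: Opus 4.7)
My plan is to prove this via the classical Chermak--Delgado measure. For each subgroup $H\leqslant G$, set
\[
m_G(H):=|H|\cdot|C_G(H)|,
\]
and let $\mathcal{L}(G)$ denote the collection of subgroups of $G$ on which $m_G$ attains its global maximum.

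The most technical step is to establish the following two structural properties of $\mathcal{L}(G)$: (i) if $H,K\in\mathcal{L}(G)$, then $\langle H,K\rangle,\;H\cap K\in\mathcal{L}(G)$; (ii) if $H\in\mathcal{L}(G)$, then $C_G(H)\in\mathcal{L}(G)$ and $C_G(C_G(H))=H$. Property (i) follows by combining the general inclusions $C_G(\langle H,K\rangle)=C_G(H)\cap C_G(K)$ and $C_G(H\cap K)\supseteq\langle C_G(H),C_G(K)\rangle$ with the order identity $|H|\cdot|K|=|H\cap K|\cdot|HK|\leqslant|H\cap K|\cdot|\langle H,K\rangle|$; a short computation then yields $m_G(\langle H,K\rangle)\cdot m_G(H\cap K)\geqslant m_G(H)\cdot m_G(K)$, and since each factor on the left is bounded above by the maximum value of $m_G$, maximality forces both to attain it. Property (ii) is immediate from $H\leqslant C_G(C_G(H))$ together with maximality of $m_G(H)$: the inclusion gives $m_G(C_G(H))\geqslant m_G(H)$, equality at the top in turn forces $|C_G(C_G(H))|=|H|$, whence $C_G(C_G(H))=H$.

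Define $N$ to be the intersection of all members of $\mathcal{L}(G)$. By (i), $N\in\mathcal{L}(G)$, and by construction it is the unique minimal element. Since $m_G$ is determined by the abstract group structure of $G$, every $\phi\in\Aut(G)$ permutes $\mathcal{L}(G)$, and hence fixes $N$; thus $N$ is characteristic in $G$. Applying (ii), $C_G(N)\in\mathcal{L}(G)$, and the minimality of $N$ forces $N\leqslant C_G(N)$, i.e.\ $N$ is abelian.

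For the numerical bound, the hypothesis that $A$ is abelian yields $A\leqslant C_G(A)$, whence $m_G(A)\geqslant|A|^2$. Using $m_G(N)\geqslant m_G(A)$ together with the trivial estimate $|C_G(N)|\leqslant|G|$, one finds
\[
|A|^2\leqslant m_G(N)=|N|\cdot|C_G(N)|\leqslant|N|\cdot|G|,
\]
which rearranges to $[G:N]\leqslant[G:A]^2$, as required. All of the genuine work is concentrated in property (i); once the Chermak--Delgado measure is in place, everything else is formal.
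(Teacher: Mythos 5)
Your proof is correct and is exactly the standard Chermak--Delgado measure argument; the paper itself gives no proof but cites Isaacs' \emph{Finite group theory} (Theorem 1.41), whose proof is essentially the one you have written, via the lattice of subgroups maximizing $|H|\cdot|C_G(H)|$ and its minimal (hence characteristic, abelian) member.
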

	
It follows from Chermak-Delgado theorem that
	\[
	\overline{\J}(\Gamma)\leqslant\J(\Gamma)\leqslant\overline{\J}(\Gamma)^2
	\]
for any Jordan group $\Gamma$. The weak Jordan constants for $\Cr_2(\kk)$ and $\Cr_3(\kk)$ were computed by Prokhorov and Shramov.

\begin{thm}\cite[Proposition 1.2.3, Theorem 1.2.4]{PS-J3}
	Suppose that the field $\kk$ has characteristic 0. Then one has
	\[
	\overline{\J}(\Cr_2(\kk))\leqslant 288,\ \ \ \ \ \ \overline{\J}(\Cr_3(\kk))\leqslant 10\ 368.
	\]
	These bounds become equalities if the base field $\kk$ is algebraically closed.
\end{thm}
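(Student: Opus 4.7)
The plan is to apply the $G$-equivariant Minimal Model Program to reduce the question to finite groups acting biregularly on Mori fibre spaces of dimension $n \in \{2,3\}$, and then to bound the index of a largest abelian subgroup case by case.

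Concretely, given a finite subgroup $G \subset \Cr_n(\kk)$, a $G$-equivariant resolution of indeterminacy embeds $G$ into $\Aut(X)$ for some smooth projective rational variety $X$ of dimension $n$. Running the $G$-equivariant MMP on $X$ produces a $G$-Mori fibre space $\pi : X' \to S$. Since $\overline{\J}(\Cr_n(\kk))$ is the supremum of the quantities $\min_{A \leqslant G \text{ abelian}}[G:A]$ over all finite $G$, it is enough to bound this quantity for $G \subset \Aut(X')$ as $X' \to S$ ranges over all possible Mori fibre spaces.

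For $n=2$, the $G$-MFS $X'$ is either a del Pezzo surface of degree $d \in \{1,\ldots,9\}$ with $\Pic(X')^G \cong \ZZ$, or a $G$-equivariant conic bundle over $\mathbb{P}^1$. The automorphism groups of del Pezzo surfaces are completely described classically (Manin, Dolgachev--Iskovskikh), so the minimal abelian index of any finite $G \subset \Aut(X')$ can be read off in each family. For conic bundles, one uses the exact sequence $1 \to G_K \to G \to G_B \to 1$, where $G_B \subset \PGL_2(\kk)$ falls into Klein's list (cyclic, dihedral, $\Alt_4$, $\Sym_4$, $\Alt_5$) and $G_K$ acts on the generic fibre, hence also sits inside $\PGL_2(\kk)$. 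Combining the bounds on $G_B$ and $G_K$ and taking the maximum over all cases (del Pezzo and conic bundle) yields $\overline{\J}(\Cr_2(\kk)) \leqslant 288$. The bound $288$ is attained, when $\kk$ is algebraically closed, by an explicit finite subgroup of $\Aut(\mathbb{P}^1 \times \mathbb{P}^1)$ or a toric del Pezzo surface with large symmetry.

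For $n=3$ the strategy is the same, but now $X'$ is a $G$-Fano threefold, a $G$-conic bundle over a rational surface, or a $G$-del Pezzo fibration over $\mathbb{P}^1$. In the last two classes one recursively applies the two-dimensional bound together with the analysis of the fibrewise kernel; in the Fano case one uses the Mori--Mukai and Iskovskikh--Prokhorov classification together with computations of the relevant automorphism groups. Taking the maximum gives $\overline{\J}(\Cr_3(\kk)) \leqslant 10\,368$. Equality over algebraically closed $\kk$ follows by exhibiting an analogous product-type example (for instance on a toric Fano threefold such as $(\mathbb{P}^1)^3$ with an extended permutation action).

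The main obstacle is the three-dimensional case. Unlike in dimension two, the classification of $G$-Mori fibre spaces in dimension three is long and the relevant automorphism groups must be treated family by family; moreover, for $G$-conic bundles over a rational surface both the base and the generic fibre can contribute sizable factors, and one must argue uniformly that no configuration exceeds $10\,368$. Producing an explicit finite subgroup realizing equality is comparatively short once the correct toric example is identified, but verifying that the upper bound is sharp and not an artefact of the estimates is the delicate point.
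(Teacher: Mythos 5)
You should first note that the paper does not prove this statement at all: it is imported verbatim from Prokhorov--Shramov \cite{PS-J3}, so the only meaningful benchmark is the proof in that reference. Your outline does follow the same overall road map as theirs --- regularization, the $G$-equivariant MMP, a case analysis of del Pezzo surfaces and conic bundles in dimension $2$ and of $G$-Fano threefolds, del Pezzo fibrations and conic bundles over surfaces in dimension $3$ --- and your guesses for the extremal examples are essentially the correct ones: $(\Alt_5\times\Alt_5)\rtimes\ZZ/2$ acting on $\PP^1\times\PP^1$ has largest abelian subgroups of order $25$, hence index $7200/25=288$, and $(\Alt_5)^3\rtimes\Sym_3$ acting on $(\PP^1)^3$ gives $1\,296\,000/125=10\,368$.

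However, as written the proposal is a plan rather than a proof, and the gaps sit exactly where the work lies. First, the constants are never derived: in the conic bundle case ``combining the bounds on $G_B$ and $G_K$'' only produces a product-type estimate with no control over how an abelian subgroup of the kernel and one of the base fit together inside the extension; to get anywhere one needs structural input, e.g.\ the fact that any element normalizing a cyclic subgroup of the fibre group inverts or centralizes it (Serre's lemma, Lemma \ref{lem: serr-lemma} above), or that a large kernel forces the surface to be a del Pezzo surface. Second, in dimension $3$ you cannot ``read off'' automorphism groups from the Mori--Mukai and Iskovskikh--Prokhorov lists: those classify \emph{smooth} Fano threefolds, whereas the output of the $G$-MMP is a $G$-Fano threefold with terminal, possibly non-Gorenstein, singularities, and handling these (via boundedness statements and a delicate analysis of actions of $p$-subgroups, singular points, etc.) is the bulk of the Prokhorov--Shramov paper, not a family-by-family table lookup. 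Third, the recursive step for del Pezzo fibrations and conic bundles over surfaces requires Jordan-type bounds for automorphisms of the generic fibre over the \emph{non-closed} function field of the base, a point your sketch does not address. Finally, sharpness requires verifying that the exhibited groups embed into $\Cr_2(\kk)$, resp.\ $\Cr_3(\kk)$, and that no abelian subgroup of smaller index exists; you gesture at this but do not carry it out. So the proposal identifies the right strategy but would need all of these components supplied before it constitutes a proof.
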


As we noticed above, one can also obtain an upper bound for $\J(\Cr_2(\kk))$ from this theorem, namely \[\J(\Cr_2(\kk))\leqslant 82\ 944=288^2.\] We will show that this bound is very far from being sharp. The goal of this paper is to compute an exact Jordan constant for the plane Cremona group. Our first main theorem is

\begin{thm}\label{thm: main}
	Let $\kk$ be an algebraically closed field of characteristic 0. Then
	\[
	\J(\Cr_2(\kk))=7200.
	\]
\end{thm}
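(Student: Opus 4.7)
The plan is to prove the two matching bounds $\J(\Cr_2(\kk)) \geqslant 7200$ and $\J(\Cr_2(\kk)) \leqslant 7200$ separately.

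For the lower bound I would exhibit an explicit finite subgroup of $\Cr_2(\kk)$ with no small normal abelian subgroup. The icosahedral subgroup $\Alt_5 \hookrightarrow \PGL_2(\kk)$ yields a biregular action of the wreath product
\[
G \;:=\; (\Alt_5 \times \Alt_5) \rtimes \langle \tau \rangle
\]
on the quadric $\PP^1 \times \PP^1$, where $\tau$ interchanges the two factors; via the birational equivalence $\PP^1 \times \PP^1 \dashrightarrow \PP^2$ this gives $G \hookrightarrow \Cr_2(\kk)$. I would then check that $G$ has no nontrivial normal abelian subgroup: if $N \triangleleft G$ is abelian, then $N \cap (\Alt_5 \times \Alt_5)$ is a normal abelian subgroup of $\Alt_5 \times \Alt_5$, hence trivial by simplicity and nonabelianness of $\Alt_5$; and an involution of the form $(a,b)\tau$ generates a normal $\ZZ/2$ only if it is centralized by all of $\Alt_5 \times \Alt_5$, forcing $a = b = 1$, but then $\tau$ itself is not centralized by $(h,k)$ whenever $h \neq k$. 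Hence the minimal index of a normal abelian subgroup of $G$ is $|G| = 7200$.

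For the upper bound I would apply the equivariant Minimal Model Program in dimension two. After regularization, any finite $G \subset \Cr_2(\kk)$ acts biregularly on a smooth rational surface, and Iskovskikh's $G$-equivariant MMP produces a $G$-Mori fibre space: either a del Pezzo surface $X$ with $\rk\Pic(X)^G = 1$, or a $G$-equivariant conic bundle $X \to \PP^1$ with $\rk\Pic(X)^G = 2$. I would then use the Dolgachev-Iskovskikh classification of finite subgroups in the corresponding automorphism groups. For $X = \PP^2$, the Jordan constant of $\PGL_3(\kk)$ is $360$, attained by the Valentiner embedding of $\Alt_6$, which is $\leqslant 7200$. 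For $X = \PP^1 \times \PP^1$, one works inside $(\PGL_2(\kk))^2 \rtimes \ZZ/2$, uses the classical list (cyclic, dihedral, $\Alt_4$, $\Sym_4$, $\Alt_5$) of finite subgroups of $\PGL_2(\kk)$, and shows that the maximal index of a normal abelian subgroup is exactly $7200$, attained only by $\Alt_5 \wr \ZZ/2$. For del Pezzo surfaces of degree $1 \leqslant d \leqslant 6$ the automorphism groups are of moderate size and each finite subgroup admits a normal abelian subgroup of index well below $7200$. For $G$-conic bundles one uses the exact sequence $1 \to G_K \to G \to G_B \to 1$ where $G_B \subset \PGL_2(\kk)$ has order $\leqslant 60$ and $G_K$ contains a large abelian subgroup coming from switching components in singular fibres.

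The main obstacle will be the $\PP^1 \times \PP^1$ case: one must not merely estimate, but precisely determine the largest possible index of a normal abelian subgroup over all finite subgroups of $(\PGL_2(\kk))^2 \rtimes \ZZ/2$, and confirm that this maximum is $7200$, attained uniquely by $\Alt_5 \wr \ZZ/2$. This requires a careful subgroup analysis using the classification of finite subgroups of $\PGL_2(\kk)$ combined with the observation that whenever $G$ surjects onto $\ZZ/2$ its two projections to $\PGL_2(\kk)$ must be conjugate. The conic bundle case has its own subtleties coming from Bertini-type involutions and the description of $G_K$, but in all configurations the resulting Jordan constants are comfortably below $7200$, so the del Pezzo surface $\PP^1 \times \PP^1$ yields the sharp value.
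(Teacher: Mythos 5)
Your overall strategy (regularize, run the $G$-equivariant MMP, treat del Pezzo surfaces and conic bundles, and realize the value $7200$ by $(\Alt_5\times\Alt_5)\rtimes\ZZ/2$ acting on $\PP^1\times\PP^1$) is the same as the paper's, and your verification that this group has no nontrivial normal abelian subgroup is correct (indeed slightly more complete than the paper's, since you also rule out a normal $\ZZ/2$ meeting $\Alt_5\times\Alt_5$ trivially). The genuine gap is in the conic bundle case, which you dismiss with two incorrect or unsupported claims. First, it is not true that $G_B\subset\PGL_2(\kk)$ has order at most $60$: finite subgroups of $\PGL_2(\kk)$ include cyclic and dihedral groups of arbitrarily large order, so only the ``exceptional'' possibilities $\Alt_4,\Sym_4,\Alt_5$ are bounded. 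Second, the fibrewise group $G_F$ (your $G_K$), being a finite subgroup of the automorphism group of the generic fibre, can itself be $\Alt_5$, which contains no large abelian subgroup at all; the picture of ``a large abelian subgroup coming from switching components in singular fibres'' does not cover this. Consequently the hard configuration --- $G_F$ exceptional and $G_B$ cyclic or dihedral of unbounded order, or both $G_F$ and $G_B$ large cyclic/dihedral --- is exactly where a real argument is needed, and your claim that all conic bundle configurations land ``comfortably below $7200$'' is unjustified: with the paper's method the bound extracted from the case $G_F$ exceptional, $G_B$ cyclic/dihedral is precisely $7200$, not something comfortably smaller.

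The paper closes this gap as follows. One chooses $g\in G$ whose image generates a cyclic subgroup $G_B'\subset G_B$ of index at most $2$, so that $\langle g\rangle$ has index at most $|G_F|\leqslant 60$ in the preimage $(G_F)_\bullet\langle\varphi(g)\rangle$; the Chermak--Delgado theorem then produces a \emph{characteristic} abelian subgroup of index at most $60^2=3600$ there, which is automatically normal in $G$ of index at most $7200$ because the preimage has index at most $2$ in $G$. When both $G_F$ and $G_B$ are cyclic or dihedral one gets a much better bound ($\leqslant 32$) using Serre's lemma that an element normalizing a cyclic subgroup $\langle h\rangle\subset G_F$ conjugates $h$ to $h^{\pm 1}$, so that $\langle g^2,h\rangle$ is abelian of small index. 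Without some such mechanism (small-index cyclic subgroups plus Chermak--Delgado, or Serre's lemma), your upper bound for conic bundles is not established, and hence neither is $\J(\Cr_2(\kk))\leqslant 7200$. A smaller remark: in the $\PP^1\times\PP^1$ case you do not need to determine the exact maximum or its uniqueness; the inequality $\leqslant 7200$ together with the explicit example suffices, and the paper obtains it by a short case analysis on the common isomorphism type of the two projections of $G_0$.
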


Further, we compute the Jordan constant for Cremona group $\Cr_2(\RR)$ and, as a by-product, for the group $\Cr_2(\QQ)$.

\begin{thm}\label{thm: J for Real Cremona}
	One has
	\[
	\J(\Cr_2(\RR))=120,\ \ \ \ \overline{\J}(\Cr_2(\RR))=20.
	\]
\end{thm}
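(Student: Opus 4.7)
The strategy is to combine the $G$-equivariant Minimal Model Program over $\RR$ with a case-by-case analysis of automorphism groups of real $G$-minimal rational surfaces. Given any finite $G \subset \Cr_2(\RR)$, after regularising the birational action one may assume $G \subset \Aut(X)$ with automorphisms defined over $\RR$, where $X$ is a smooth projective real rational surface that is $G$-minimal: either a del Pezzo surface with $\Pic(X)^G$ of rank $1$, or a $G$-conic bundle $\pi\colon X \to B$ over a smooth rational curve with $\Pic(X)^G$ of rank $2$. Over $\RR$ one must keep track of the Galois action of $\Gal(\CC/\RR)$ on $\Pic(X_\CC)$, on the $(-1)$-curves and on the singular fibres of $\pi$; the relevant real surfaces and their real automorphism groups have been described in earlier work on finite subgroups of $\Cr_2(\RR)$.

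For the lower bounds $\J(\Cr_2(\RR))\geq 120$ and $\overline{\J}(\Cr_2(\RR))\geq 20$, take $X=S_5$, the del Pezzo surface of degree $5$ obtained by blowing up $\PP^2_\RR$ in four real points in general position. All ten $(-1)$-curves of $S_5$ are then defined over $\RR$, the Galois action on $\Pic(X_\CC)$ is trivial, and the complex automorphism group $\Sym_5$ (acting by the Weyl-group action of type $A_4$) is already defined over $\RR$; hence $\Sym_5\subset\Cr_2(\RR)$. Now $\Sym_5$ has no non-trivial abelian normal subgroup, so $\J(\Sym_5)=120$; and its largest abelian subgroup is cyclic of order $6$ (generated by a product of a transposition and a disjoint $3$-cycle), so $\overline{\J}(\Sym_5)=20$.

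For the upper bound one goes through the two classes of $G$-minimal surfaces. For real del Pezzo surfaces of $\Pic^G$-rank $1$ in degrees $1$--$9$, the finite subgroups of $\Aut(X)$ are controlled by the explicit descriptions of the real automorphism groups; configurations that occur over $\CC$, such as the Valentiner group $\Alt_6\subset\PGL_3(\CC)$ or the largest $2$-groups acting on $S_1$ and $S_2$, do not survive the reality constraint (they require genuinely complex actions), and a direct inspection shows that $\Sym_5$ on $S_5$ is the unique worst case. For $G$-conic bundles $\pi\colon X\to B$ one uses the exact sequence
\[
1\to G_K\to G\to G_B\to 1,
\]
where $G_K$ acts fibrewise and hence embeds, after restriction to a smooth real fibre, into a real form of $\PGL_2$, while $G_B\subset\Aut(B)\subset\PGL_2(\RR)$ acts on the base; since the finite subgroups of $\PGL_2(\RR)$ are only cyclic and dihedral, both $G_K$ and $G_B$ are cyclic or dihedral, and a uniform analysis shows that $G$ always admits a normal abelian subgroup of index much smaller than $120$ and an abelian subgroup of index much smaller than $20$.

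The main obstacle is the bookkeeping for the conic bundle case together with the non-split real forms of low-degree del Pezzo surfaces. The Galois action can glue pairs of conjugate complex $(-1)$-curves or conjugate singular fibres of $\pi$, simultaneously eliminating many of the largest finite subgroups that appear over $\CC$ and producing real-only families (for example pointless real del Pezzo surfaces, or twisted $\PP^1$-bundles such as real conic bundles without sections) that still must be examined. The crux of the proof is a uniform verification that none of these real-only cases breaks the bounds $\J\leq 120$ and $\overline{\J}\leq 20$ realised by $S_5$.
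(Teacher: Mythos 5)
Your proposal follows essentially the same route as the paper: regularize the finite group, run the $G$-equivariant MMP over $\RR$, bound the conic-bundle case via the exact sequence and the fact that finite subgroups of $\PGL_2(\RR)$ are cyclic or dihedral, treat real del Pezzo surfaces degree by degree, and realize the extremal values $120$ and $20$ by $\Sym_5$ acting on the degree-$5$ del Pezzo surface obtained by blowing up four real points of $\PP^2_\RR$. The only caveat is that your upper-bound ``direct inspection'' of the del Pezzo cases is asserted rather than carried out, whereas this case-by-case verification (e.g.\ $\PO(3,1)$ for $\Quad_{3,1}$, the real cubic surfaces, the Geiser double cover in degree $2$, and $\GL_2(\RR)$ in degree $1$) is precisely the substance of the paper's corresponding propositions; also note that $\Sym_5$ is not literally the unique worst case, since it also acts on a real cubic surface (the Clebsch cubic), though this does not affect the bounds.
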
 

\begin{thm}\label{thm: J for Q Cremona}
	One has
	\[
	\J(\Cr_2(\QQ))=120,\ \ \ \ \overline{\J}(\Cr_2(\QQ))=20.
	\]
\end{thm}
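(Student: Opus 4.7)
The proof will be very short because Theorem \ref{thm: J for Real Cremona} already supplies the upper bound, and a single carefully chosen finite subgroup realizes the matching lower bound.

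\textbf{Upper bound.} Extension of scalars from $\QQ$ to $\RR$ provides a natural injective homomorphism $\Cr_2(\QQ)\hookrightarrow\Cr_2(\RR)$: every birational self-map of $\PP^2_\QQ$ extends uniquely to one of $\PP^2_\RR$. Since both $\J$ and $\overline{\J}$ are monotone with respect to passage to subgroups (a finite subgroup of a subgroup is still a finite subgroup of the ambient group), Theorem \ref{thm: J for Real Cremona} gives at once
\[
\J(\Cr_2(\QQ))\leqslant 120, \qquad \overline{\J}(\Cr_2(\QQ))\leqslant 20.
\]

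\textbf{Lower bound.} I will exhibit a single finite subgroup of $\Cr_2(\QQ)$ attaining both equalities simultaneously. Let $X$ be the del Pezzo surface of degree $5$ obtained by blowing up $\PP^2_\QQ$ at the four $\QQ$-rational points $(0:0:1)$, $(0:1:0)$, $(1:0:0)$, $(1:1:1)$, which lie in general position. Then $X$ is $\QQ$-rational and all ten $(-1)$-curves on $X$ are defined over $\QQ$; equivalently, $X$ is split over $\QQ$. The classical description of $\Aut(X)$ as the subgroup of $\Weyl(A_4)$ preserving the classes of exceptional curves therefore gives $\Aut_\QQ(X)\cong\Sym_5$, and consequently an embedding $\Sym_5\hookrightarrow\Bir(X_\QQ)\cong\Cr_2(\QQ)$.

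\textbf{Jordan constants of $\Sym_5$.} The only nontrivial proper normal subgroup of $\Sym_5$ is $\Alt_5$, which is simple and non-abelian; hence the only normal abelian subgroup of $\Sym_5$ is trivial, giving $\J(\Sym_5)=120$. A maximal abelian subgroup of $\Sym_5$ is cyclic of order $6$, generated by the product of a transposition and a disjoint $3$-cycle; a direct inspection of cyclic and non-cyclic abelian subgroups confirms that none has order exceeding $6$, so $\overline{\J}(\Sym_5)=120/6=20$.

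The only step with any real content is the verification that the $\Sym_5$-action on $X$ is fully realized over $\QQ$; this reduces to the splitness of $X$ (all ten exceptional curves being $\QQ$-rational), which is immediate from the construction. The remainder of the argument is purely formal, and once the equalities are matched the theorem follows.
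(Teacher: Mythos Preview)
Your proof is correct and follows exactly the same strategy as the paper's own argument: the upper bound comes from the inclusion $\Cr_2(\QQ)\hookrightarrow\Cr_2(\RR)$ together with Theorem~\ref{thm: J for Real Cremona}, and the lower bound is realized by $\Sym_5$ acting on the split del Pezzo surface of degree~5 over~$\QQ$. Your version simply spells out details (the explicit four blown-up points, the computation of $\J(\Sym_5)$ and $\overline{\J}(\Sym_5)$) that the paper leaves implicit.
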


Let us consider the category whose objects are $\RR$-schemes with non-empty real loci and morphisms are defined as follows: we say that there is a morphism $f:X\dashrightarrow Y$ if $f$ is a rational map defined at all {\it real} points of $X$. Automorphisms in such a category are called {\it birational diffeomorphisms} and the corresponding group is denoted by $\Aut(X(\RR))$. In recent years, birational diffeomorphisms of real rational projective surfaces have been studied intensively. As a by-product of Theorem \ref{thm: J for Real Cremona}, we will get the Jordan constant for the group of birational diffeomorphisms of $\PP_\RR^2$ and the sphere $\Sph^2$ viewed as the real locus $\Quad_{3,1}(\RR)$ of the 2-dimensional quadric
\[
\Quad_{3,1}=\big\{[x_0:x_1:x_2:x_3]: x_0^2+x_1^2+x_2^2-x_3^2=0\big \}\subset\PP_\RR^{3}.
\] 
\begin{thm}\label{thm: Bir-Diff}
	The following holds:
	\begin{align*}
	\J\big(\Aut(\PP^2(\RR))\big)=60,&\ \ \ \ \ \ \overline{\J}\big(\Aut(\PP^2(\RR))\big)=12,  \\
	\J(\Quad_{3,1}(\RR))=60,&\ \ \ \ \ \ \overline{\J}(\Quad_{3,1}(\RR))=12.
	\end{align*}
\end{thm}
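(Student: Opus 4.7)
The plan is to sandwich these constants between matching lower and upper bounds. Both $\PP^2_\RR$ and $\Quad_{3,1}$ are $\RR$-rational, so the inclusions $\Aut(\PP^2(\RR))\subset\Cr_2(\RR)$ and $\Aut(\Quad_{3,1}(\RR))\subset\Bir(\Quad_{3,1})\cong\Cr_2(\RR)$ together with Theorem~\ref{thm: J for Real Cremona} give the a priori bounds $\J\leqslant 120$ and $\overline{\J}\leqslant 20$; the task is to sharpen these to $60$ and $12$.

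For the lower bounds I would exhibit an explicit copy of the icosahedral group $\Alt_5$ in each group: for $\PP^2(\RR)$ via the three-dimensional real orthogonal representation $\Alt_5\hookrightarrow\mathrm{SO}(3)\hookrightarrow\PGL_3(\RR)\hookrightarrow\Aut(\PP^2(\RR))$, and for $\Sph^2=\Quad_{3,1}(\RR)$ via the rotational action of $\Alt_5\subset\mathrm{SO}(3)\subset\OO(3)$ on the round sphere. Since $\Alt_5$ is simple and non-abelian of order $60$, its only normal abelian subgroup is trivial and its largest abelian subgroup is cyclic of order $5$, so $\J(\Alt_5)=60$ and $\overline{\J}(\Alt_5)=12$, giving $\J\geqslant 60$ and $\overline{\J}\geqslant 12$.

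The core of the proof is the matching upper bound, for which I would establish a linearization statement: every finite $G\subset\Aut(\PP^2(\RR))$ is conjugate within $\Aut(\PP^2(\RR))$ to a subgroup of $\PGL_3(\RR)$, and every finite $G\subset\Aut(\Sph^2)$ is conjugate to a subgroup of $\OO(3)$ acting on the sphere in the standard way. The method is a $G$-equivariant real MMP applied to a $G$-equivariant resolution, producing a $G$-Mori fiber space defined over $\RR$; because $G$ acts on the real locus by diffeomorphisms, the output is $G$-equivariantly diffeomorphic on real loci to the original surface, and the classification of real minimal $G$-del Pezzo surfaces and $G$-conic bundles (Koll\'ar--Mangolte, Zimmermann) then identifies this output with $\PP^2_\RR$ (resp.\ $\Quad_{3,1}$) itself. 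Once $G$ is linearized, every compact subgroup of $\PGL_3(\RR)$ conjugates into its maximal compact $\mathrm{SO}(3)$; running through the classical list of finite subgroups of $\mathrm{SO}(3)$ (cyclic, dihedral, $\Alt_4$, $\Sym_4$, $\Alt_5$) and doing the analogous analysis for $\OO(3)$ yields $\J=60$ and $\overline{\J}=12$, both attained by $\Alt_5$.

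I expect the main obstacle to be the linearization step, because the drop from $\J(\Cr_2(\RR))=120$ to $60$ is strict: $\Sym_5$ embeds in $\Cr_2(\RR)$ via its action on a real del Pezzo surface of degree $5$ and this is precisely what makes the Cremona bound $120$ sharp, so one has to verify that no such $\Sym_5$ (and no other ``exotic'' finite subgroup appearing in the proof of Theorem~\ref{thm: J for Real Cremona}) can be realized inside the much smaller birational-diffeomorphism group---essentially because the only $\Sym_5$-invariant real model is the del Pezzo itself, whose real locus is not $\Sym_5$-equivariantly diffeomorphic to $\PP^2(\RR)$. A careful case-by-case check of the list of minimal real $G$-surfaces used to prove Theorem~\ref{thm: J for Real Cremona} then confirms that only the linearizable subgroups survive in $\Aut(\PP^2(\RR))$ and $\Aut(\Sph^2)$.
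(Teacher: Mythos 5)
Your lower bounds and the reduction to a minimal real $G$-model are fine, but the central step of your upper bound --- the claim that every finite subgroup of $\Aut(\PP^2(\RR))$ (resp.\ of $\Aut(\Quad_{3,1}(\RR))$) can be conjugated into $\PGL_3(\RR)$ (resp.\ $\OO(3)$) --- is not only unproved by your sketch, it is false. The assertion that the output of the $G$-equivariant MMP ``is identified with $\PP^2_\RR$ itself'' does not follow from the real loci being $G$-equivariantly diffeomorphic: by Koll\'ar's result the minimal model is only $\PP^2_\RR$ blown up at $k\leqslant 4$ pairs of complex conjugate points (so of degree $9-2k$), and these surfaces carry finite automorphism groups that are genuinely non-linearizable as birational diffeomorphisms. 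For instance, the Geiser (resp.\ Bertini) involution of a real del Pezzo surface of degree $2$ (resp.\ $1$) obtained by blowing up conjugate point pairs acts biregularly, is defined at every real point, and fixes a real curve of positive genus; such a fixed curve is a conjugacy invariant, so this involution is not conjugate to any linear one, even in $\Cr_2(\RR)$. The same phenomenon occurs for the sphere (cf.\ Robayo's classification of prime-order birational diffeomorphisms of $\Sph^2$, where non-linearizable involutions appear). So a proof cannot go through linearization; it must bound $\J$ and $\overline{\J}$ of $\Aut(X)$ for each of the possible minimal models directly.

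That is exactly where the real work lies, and it is the part your proposal leaves unsubstantiated: after the reduction, the dangerous cases are the del Pezzo surfaces of degree $5$ and $3$ with $\Aut(X_\CC)\cong\Sym_5$, which would give $120$ and $20$ rather than $60$ and $12$. You acknowledge this but dismiss it by saying the real locus of the $\Sym_5$-model is not diffeomorphic to $\RR\PP^2$, which is circular --- the degree $5$ and $3$ surfaces in question \emph{are} blown up at conjugate point pairs and \emph{do} have real locus $\RR\PP^2$, so one must show that $\Sym_5$ cannot be their full (real) automorphism group. The paper does this geometrically: in degree $5$ there are only two real $(-1)$-curves, incompatible with the $\Sym_5$-action on the Petersen graph; in degree $3$ an order-$5$ automorphism would have to stabilize the three real lines, hence fix a plane in $\PP^3_\RR$ pointwise and be a reflection, a contradiction. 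Without an argument of this kind (and the corresponding check that for the sphere only degrees $8,6,4,2$ and conic bundles occur, all giving $\J\leqslant 60$, $\overline{\J}\leqslant 12$), your upper bound is not established.
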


\subsection*{Notation} Our notation is mostly standard.

\begin{itemize}
	\item $\Sym_n$ denotes the symmetic group of degree $n$;
	\item $\Alt_n$ denotes the alternating group of degree $n$;
	\item $\Dih_n$ denotes the dihedral group of order $2n$;
	\item $\Cyc_n$, $n>2,$ denotes a cyclic characteristic subgroup $\Cyc_n\subset\Dih_n$ of index 2;
	\item $\OO(p,q)$ denotes the indefinite orthogonal group of all linear transformations of an $(p+q)$-dimensional real vector space that leave invariant a nondegenerate, symmetric bilinear form of signature $(p,q)$.
	\item $\PO(p,q)$ denotes the corresponding projective orthogonal group, i.e. the image of the orthogonal
	group $\OO(p,q)$ under the projectivization homomorphism $\GL_{p+q}(\RR)\to\PGL_{p+q}(\RR)$;
	\item For a scheme $X$ over $\RR$ we denote by $X_\CC$ its complexification
	\[
	\XC=X\times_{\Spec\RR}\Spec\CC
	\]
	We also make the distinction between $X_\RR$ and $X(\RR)$. The first is a scheme over $\RR$ (whose closed points are either real points or conjugate pairs of complex points of $X_\CC$), and $X(\RR)$ is the set of the real points only. For example, $\PP_\RR^n$ is projective $n$-space as a scheme over $\RR$ and $\PP^n(\RR)$ is the set of real points, typically denoted by $\RR\PP^n$.
	\item $\langle a\rangle$ denotes a cyclic group generated by $a$;
	\item $A_\bullet B$ denotes an extension of $B$ with help of a normal subgroup $A$.
\end{itemize}

\subsection*{Acknowledgments} This work was performed in Steklov Mathematical Institute and supported by the Russian Science Foundation under grant 14-50-00005. The author would like to thank Constantin Shramov for many useful discussions and remarks. The author would also like to thank Fr\'{e}d\'{e}ric Mangolte, Yuri Prokhorov, Andrey Trepalin, Alexandr Zaitsev and the anonymous referee for valuable comments.

\section{Some auxiliary results}

In this short section we collect some useful facts concerning Jordan property.  

\begin{lem}\label{lem: properties}
	The following assertions hold.	
	\begin{enumerate}
		\item If $\Gamma_1$ is a subgroup of a Jordan group $\Gamma_2$, then $\Gamma_1$ is Jordan and $\J(\Gamma_1)\leqslant\J(\Gamma_2)$.
		\item If $\Gamma_1$ is a Jordan group, and there is a surjective homomorphism $\Gamma_1\to\Gamma_2$ with finite kernel, then $\Gamma_2$ is also Jordan with $\J(\Gamma_2)\leqslant \J(\Gamma_1)$.
	\end{enumerate}
\end{lem}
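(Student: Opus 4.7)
For part (1), the proof is essentially tautological: any finite subgroup $G\subset\Gamma_1$ is also a finite subgroup of $\Gamma_2$, so applying the Jordan property of $\Gamma_2$ to $G$ directly yields a normal abelian subgroup $A\triangleleft G$ of index at most $\J(\Gamma_2)$. This simultaneously shows that $\Gamma_1$ is Jordan and gives the bound $\J(\Gamma_1)\leqslant\J(\Gamma_2)$. No real content here beyond unwinding definitions.

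For part (2), let $\pi\colon\Gamma_1\twoheadrightarrow\Gamma_2$ denote the surjection. My plan is, given any finite subgroup $G_2\subset\Gamma_2$, to produce a finite subgroup $G_1\subset\Gamma_1$ with $\pi(G_1)=G_2$. Once such a lift is in hand, the Jordan property of $\Gamma_1$ furnishes a normal abelian subgroup $A_1\triangleleft G_1$ with $[G_1:A_1]\leqslant\J(\Gamma_1)$. Then I would take $A_2:=\pi(A_1)\subset G_2$. Being the image of an abelian subgroup, $A_2$ is abelian; being the image of a normal subgroup under a surjective homomorphism $G_1\to G_2$, it is normal in $G_2$; and the natural surjection $G_1/A_1\twoheadrightarrow G_2/A_2$ gives
\[
[G_2:A_2]\;\leqslant\;[G_1:A_1]\;\leqslant\;\J(\Gamma_1),
\]
which is exactly the required bound.

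The main (and only) obstacle is the lifting step: in full generality a finite subgroup $G_2\subset\Gamma_2$ need not admit a finite preimage inside $\Gamma_1$, since $\pi^{-1}(G_2)$ could be infinite and contain no finite section over $G_2$. The clean way to handle this is to add a hypothesis under which the lift exists automatically. The simplest sufficient condition is that $\ker\pi$ be finite, because then $\pi^{-1}(G_2)$ is itself a finite group and one may take $G_1=\pi^{-1}(G_2)$. I expect the intended reading of the lemma to implicitly place us in such a context (in the applications appearing later in the paper, the relevant surjections will come with finite kernels or with canonical finite lifts), and I would flag this point and then proceed with the argument above.
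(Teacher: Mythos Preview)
Your treatment of part~(1) is correct and, for what it is worth, matches the paper exactly: the paper's entire proof reads ``The proofs are elementary and we omit them.''

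For part~(2) you have correctly put your finger on a genuine issue: the statement as written is \emph{false} without some additional hypothesis. Every group is a quotient of a free group, and a free group is trivially Jordan (being torsion-free, its only finite subgroup is $\{1\}$, so $\J=1$); if part~(2) held as stated, every group would be Jordan. But this is not so --- for instance $\bigoplus_{n\geqslant 5}\Alt_n$ contains each simple group $\Alt_n$ as a finite subgroup whose only abelian normal subgroup is trivial, so no uniform bound exists. Your instinct to isolate and flag the lifting step is exactly right, and the paper simply glosses over this by omitting the proof.

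The paper's only use of part~(2) is for the surjection $\GL_n(\kk)\to\PGL_n(\kk)$, $n=2,3$. Here the kernel $\kk^*$ is infinite, so your ``finite kernel'' fix does not literally apply, but your alternative phrase ``canonical finite lifts'' does: any finite subgroup of $\PGL_n(\kk)$ lifts to a finite subgroup of $\GL_n(\kk)$, e.g.\ by pulling back along $\SL_n(\kk)\to\PGL_n(\kk)$, whose kernel $\mu_n$ is finite (here one uses that $\kk$ is algebraically closed). So the application in Lemma~\ref{lem: J for simple varieties} is sound even though the lemma as stated is too strong.
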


The proofs are elementary and we omit them. Next let us compute some Jordan constants.

\begin{lem}\label{lem: J for simple varieties}
	One has
	\begin{enumerate}
		\item $\J(\GL_2(\kk))=60$.
		\item $\J(\GL_3(\kk))=360$.
		\item $\J(\PGL_2(\kk))=60$.
		\item $\J(\PGL_3(\kk))=360$.
	\end{enumerate}
\end{lem}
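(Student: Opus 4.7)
The strategy is to handle the projective groups $\PGL_n(\kk)$ first via the classical classifications of their finite subgroups, then to reduce the $\GL_n(\kk)$ cases to the projective ones using Schur's lemma together with the central projection $\GL_n\twoheadrightarrow\PGL_n$.

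For $\PGL_2(\kk)$ I would invoke Klein's classification: every finite subgroup is conjugate to $\Cyc_n$, $\Dih_n$, $\Alt_4$, $\Sym_4$, or $\Alt_5$. A direct calculation gives the minimal index of a normal abelian subgroup equal to $1,2,3,6,60$ respectively; the worst case comes from $\Alt_5$, which, being simple, admits only the trivial normal abelian subgroup. Hence $\J(\PGL_2(\kk))=60$. For $\PGL_3(\kk)$ I would invoke Mitchell's analogous classification: the primitive finite subgroups are the three simple groups $\Alt_5$, $\PSL_2(\FF_7)$, $\Alt_6$ (of orders $60,168,360$) together with the Hessian group of order $216$ and its subgroups, while imprimitive ones lie inside the normaliser of a maximal torus and thus admit an abelian subgroup of index dividing $|\Sym_3|=6$. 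Running through this list, the largest index of a normal abelian subgroup is $360$, realised by the simple group $\Alt_6$; whence $\J(\PGL_3(\kk))=360$.

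For the linear groups I would first note that $\J(\GL_n(\kk))\geq\J(\PGL_n(\kk))$ by Lemma~\ref{lem: properties}(2) applied to the projection $\GL_n(\kk)\twoheadrightarrow\PGL_n(\kk)$. For the reverse inequality, let $G\subset\GL_n(\kk)$ be a finite subgroup; by Maschke's theorem it is completely reducible, and the reducible case reduces to lower dimension (a reducible $G\subset\GL_2$ is already abelian; a reducible $G\subset\GL_3$ lies in $\GL_1\times\GL_2$, and the preimage of a normal abelian subgroup of its $\GL_2$-projection is normal abelian of the same index in $G$). Assuming $G$ is irreducible, Schur's lemma forces $Z(G)$ to consist of scalars and gives an embedding $G/Z(G)\hookrightarrow\PGL_n(\kk)$; by the classifications above, either $G/Z(G)$ is primitive, in which case $|G/Z(G)|\leq\J(\PGL_n(\kk))$ and $Z(G)$ itself is a normal abelian subgroup of the desired index, or $G/Z(G)$ is imprimitive (monomial), in which case $G$ is conjugate into the monomial subgroup of $\GL_n(\kk)$ and its intersection with the diagonal torus is a normal abelian subgroup of index dividing $n!\leq 6$. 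Either way we obtain $\J(\GL_n(\kk))\leq\J(\PGL_n(\kk))$.

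The main obstacle will be sifting through Mitchell's classification of primitive finite subgroups of $\PGL_3(\kk)$ to confirm that none of the groups in the Hessian chain, nor $\PSL_2(\FF_7)$, nor $\Alt_5$, produces an index exceeding the $360$ supplied by $\Alt_6$. Once this bookkeeping is in hand, all remaining steps are essentially mechanical.
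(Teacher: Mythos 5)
Your proposal is correct in substance but takes a genuinely different route from the paper. The paper simply quotes Collins' computation of $\J(\GL_n(\kk))$ for parts (1)--(2), pushes these values forward along the surjections $\GL_n(\kk)\to\PGL_n(\kk)$ via Lemma \ref{lem: properties} (2) to get the upper bounds in (3)--(4), and exhibits $\Alt_5\subset\PGL_2(\kk)$ and $\Alt_6\subset\PGL_3(\kk)$ for the equalities (it remarks in passing that the classification of finite subgroups of $\PGL_2$ and $\PGL_3$ could be used instead). You invert the logic: the $\PGL$ values are established directly from the Klein and Blichfeldt--Mitchell classifications, and the $\GL$ values are then deduced from them, using Maschke plus Schur's lemma to split a finite $G\subset\GL_n(\kk)$ into the reducible case (handled inside $\GL_1\times\GL_2$ or the diagonal torus), the monomial case (the torus intersection is a normal abelian subgroup of index dividing $n!$), and the primitive case (where $Z(G)$ consists of scalars and has index $|G/Z(G)|$ bounded by the maximal order of a primitive subgroup). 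This buys a proof that is self-contained modulo the classical classifications and avoids invoking Collins' theorem, which is a deep result for general $n$ even though for $n=2,3$ the values are classical; the paper's citation buys brevity. Two points need tightening. First, in your $\PGL_3$ step the dichotomy ``primitive or imprimitive (inside the normalizer of a maximal torus)'' omits the intransitive subgroups, i.e.\ those whose finite lift to $\GL_3(\kk)$ is reducible (equivalently, fixing a point or a line of $\PP^2$); these are not in general conjugate into the torus normalizer. They are handled by exactly the argument you already give for reducible subgroups of $\GL_3(\kk)$ --- a finite lift sits in $\GL_1\times\GL_2$ and yields a normal abelian subgroup of index at most $60$ --- so the bound $360$ survives, but the case must be included. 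Second, the inequality $|G/Z(G)|\leqslant\J(\PGL_n(\kk))$ in the primitive case is not formal: it holds because the maximal order of a primitive subgroup ($60$ for $n=2$, $360$ for $n=3$) happens to equal $\J(\PGL_n(\kk))$; this is precisely the bookkeeping you flag at the end, and it is immediate since the Hessian-chain groups, $\PSL_2(\FF_7)$ and $\Alt_5$ all have order strictly less than $360$.
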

\begin{proof}
	For (1) and (2) we refer the reader to \cite{J for Gl}, where $\J(\GL_n(\kk))$ is computed for each $n$. To prove (3) and (4), we recall that $\PSL_n(\kk)\simeq\PGL_n(\kk)$ for an algebraically closed $\kk$, and apply Lemma \ref{lem: properties} (2) to the natural surjections $\SL_n(\kk)\to\PSL_n(\kk)$, $n=2,3$, and get $\J(\PGL_2(\kk))\leqslant 60$ and $\J(\PGL_3(\kk))\leqslant 360$. The required equalities are given by the simple groups $\Alt_5$ and $\Alt_6$, respectively (alternatively, one can use the well-known classification of finite subgroups of $\PGL_2(\kk)$ and $\PGL_3(\kk)$, like in \cite[Lemma 2.3.1]{PS-J3}).
\end{proof}

\section{The case of algebraically closed field}

In the remaining part of the paper we shall use the standard language of $G$-varieties (see e.g. \cite{di}). We are going to deduce our main Theorem \ref{thm: main} by regularizing the action of each finite subgroup $G\subset\Cr_2(\kk)$ on some $\kk$-rational surface $X$. Then, applying to $X$ the $G$-Minimal Model Program, we reduce to the case when $X$ is either a {\it del Pezzo surface}, or a {\it conic bundle}. 

First we focus on conic bundles. Recall, that a smooth $G$-surface $(X,G)$ admits a conic bundle structure, if there is a $G$-morphism $\pi: X\rightarrow B$, where $B$ is a smooth curve and each scheme fibre is isomorphic to a reduced conic in $\PP^2$. 

Let us also fix some notation. In this paper every automorphism of a conic bundle $\pi: X\to B$ is supposed to preserve the conic bundle structure $\pi$. We shall write $\Aut(X,\pi)$ for the corresponding automorphism group. For every finite subgroup $G\subset\Aut(X,\pi)$ there exists a short exact sequence of groups
\[
1\overset{}{\longrightarrow} G_F\overset{}{\longrightarrow} G\overset{\varphi}{\longrightarrow} G_B\overset{}{\longrightarrow} 1,
\]
where $G_B\subset\Aut(B)\cong\PGL_2(\kk)$, and $G_F$ acts by automorphisms of the generic fiber $F$. Since $G$ is finite, $G_F$ is a subgroup of $\PGL_2(\kk)$. The following result will be used in the proof of Proposition \ref{prop: CB}. We sketch the proof for the reader's convenience.
\begin{lem}[{\cite[Lemma 5.2]{serre1}}]\label{lem: serr-lemma}
	Let $g\in G$ and $h\in G_F$ be such that $g$ normalizes the cyclic group $\langle h\rangle$ generated by $h$. Then $ghg^{-1}$ is equal to $h$ or to $h^{-1}$. 
\end{lem}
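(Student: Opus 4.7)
The plan is to analyse the local action of $h$ at its fixed points in the fibres of $\pi$ and compare it with that of its conjugate $ghg^{-1}$. If $h=\id$ there is nothing to prove, so assume $n:=\mathrm{ord}(h)\geqslant 2$. Since $g$ normalises $\langle h\rangle$, one has $ghg^{-1}=h^{k}$ for some integer $k$ coprime to $n$, and the problem reduces to showing $k\equiv\pm 1\pmod n$.

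Because $h\in G_F$ preserves every fibre of $\pi$ and acts on the generic fibre $F\cong\PP^1_{\kk(B)}$ as a nontrivial element of $\PGL_2$ of finite order, it has exactly two fixed points on the generic fibre. Their closure in $X$ is a curve $C$ projecting onto $B$ with degree $2$, either as two disjoint sections or as an irreducible double cover. Since $\langle h\rangle=\langle h^{k}\rangle$, the fixed loci $\Fix(h)$ and $\Fix(h^{k})$ coincide, so $g(C)=\Fix(ghg^{-1})=C$. Pick a generic $b\in B$ and a fixed point $P\in F_b\cap C$ of $h$. In coordinates on $F_b\cong\PP^1$ placing the two fixed points of $h|_{F_b}$ at $0$ and $\infty$, one has $h|_{F_b}\colon z\mapsto\zeta z$ for some primitive $n$-th root of unity $\zeta$, so $dh_P$ acts on the tangent line $T_PF_b$ by multiplication by $\zeta$, and by $\zeta^{-1}$ at the other fixed point. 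Since these eigenvalues take values in the finite set of $n$-th roots of unity and depend algebraically on the base point, they are constant along each irreducible component of $C$.

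The chain rule applied at the fixed point $P$ of $ghg^{-1}$, using that $g^{-1}(P)\in C\subset\Fix(h)$, gives
\[
d(ghg^{-1})_P \;=\; dg_{g^{-1}(P)}\circ dh_{g^{-1}(P)}\circ d(g^{-1})_P,
\]
so $d(h^{k})_P$ is conjugate to $dh_{g^{-1}(P)}$ and has the same eigenvalue on the tangent to the fibre. That eigenvalue is $\zeta$ if $g^{-1}(P)$ lies on the same local branch of $C$ as $P$, and $\zeta^{-1}$ otherwise. Comparing with $\zeta^{k}$, the eigenvalue of $h^{k}$ on $T_PF_b$ in the chosen coordinates, yields $\zeta^{k}=\zeta^{\pm 1}$, and hence $ghg^{-1}=h^{\pm 1}$. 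The main subtlety to be handled carefully is the case in which $C$ is an irreducible double cover of $B$: the two ``branches'' above a generic point of $B$ exist only locally there, so one should either pass to a small \'etale neighbourhood of $b$ over which $C\to B$ splits, or phrase the comparison intrinsically using the local character of $\langle h\rangle$ on the normal bundle $N_{C/X}$ at $P$.
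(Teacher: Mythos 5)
Your argument is correct and is essentially the paper's (Serre's) proof: both attach to $h$ the unordered pair of eigenvalues $\{\zeta,\zeta^{-1}\}$ on the fibre tangent spaces at its two fixed points and compare it with the pair $\{\zeta^{k},\zeta^{-k}\}$ of $ghg^{-1}=h^{k}$; you merely spell out, via the closure curve $C$ of the fixed locus and the chain rule at generic closed fibres, the paper's one-line claim that this pair is canonically associated with $h$. The ``subtlety'' you flag at the end is harmless: if $C$ is irreducible, then your own constancy argument (or the observation that the two eigenvalues in a fibre are interchanged by the double cover yet are roots of unity lying in the constant field $\kk$) forces $\zeta=\zeta^{-1}$, i.e.\ $n\leqslant 2$, where the statement is trivial.
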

\begin{proof}
	We may assume that the order $n$ of $h$ is greater than 2. The automorphism $h$ has two fixed points on $F$, which can be characterized by the eigenvalue of $h$ on their tangent spaces. Denote one of these eigenvalues by $\lambda$; the other is $\lambda^{-1}$ (these are primitive $n$-th roots of unity). The pair $\{\lambda,\lambda^{-1}\}$ is canonically associated with $h$, so the pair associated with $ghg^{-1}$ is also $\{\lambda,\lambda^{-1}\}$. But $ghg^{-1}=h^k$, hence the pair associated with $h^k$ is $\{\lambda^k,\lambda^{-k}\}$. Hence $k\equiv\pm 1\mod{n}$.
\end{proof}

\begin{prop}\label{prop: CB}
	Let $X$ be a smooth rational surface with a conic bundle structure $\pi: X\to B\cong\PP^1$. Then \[\J(\Aut(X,\pi))\leqslant 7200.\]
\end{prop}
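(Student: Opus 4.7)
The plan is to construct, inside any finite $G\subset\Aut(X,\pi)$, a normal abelian subgroup of index at most $7200=60\cdot 120$. The argument combines the Jordan property of $\PGL_2(\kk)$, applied to both $G_B$ and (via conjugation) $G_F$, with Serre's Lemma~\ref{lem: serr-lemma}.

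First, since $G_B\subset\Aut(\PP^1)=\PGL_2(\kk)$ and $\J(\PGL_2(\kk))=60$ by Lemma~\ref{lem: J for simple varieties}, I pick a normal abelian subgroup $A_B\triangleleft G_B$ with $[G_B:A_B]\leqslant 60$ and set $H:=\varphi^{-1}(A_B)\triangleleft G$, so that $[G:H]\leqslant 60$ and $H$ fits into $1\to G_F\to H\to A_B\to 1$. Next, classify $G_F$ as a finite subgroup of $\PGL_2(\kk)$: it is one of $\Cyc_n$, $\Dih_n$, $\Alt_4$, $\Sym_4$, $\Alt_5$. In all cases except $G_F\cong\Alt_5$, select a characteristic abelian subgroup $F_0\triangleleft G_F$ as follows: $F_0=G_F$ when $G_F$ is abelian, $F_0=\Cyc_n\subset\Dih_n$ for $n\geqslant 3$, and $F_0=V_4\subset\Alt_4,\Sym_4$. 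Since $F_0$ is characteristic in $G_F\triangleleft G$, it is normal in $G$; Lemma~\ref{lem: serr-lemma} (for cyclic $F_0$) or the identity $|\Aut(V_4)|=6$ gives $[G:C_G(F_0)]\leqslant 6$. In the case $G_F\cong\Alt_5$, I instead take $F_0:=G_F$ directly and use the naive bound $[G:C_G(G_F)]\leqslant|\Aut(\Alt_5)|=|\Sym_5|=120$. Setting $L:=H\cap C_G(F_0)\triangleleft G$, I obtain $[G:L]\leqslant 60\cdot 120=7200$, with the factor $120$ needed only for $G_F\cong\Alt_5$.

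It remains to verify that $L$ is abelian, up to passing to a characteristic subgroup of small index. When $G_F\cong\Alt_5$, one has $L\cap G_F=Z(\Alt_5)=1$, so $\varphi$ embeds $L$ into the abelian group $A_B$ and $L$ itself is abelian; here the full index $7200$ is attained. In the other cases, $F_0$ is central in $L$ and $L/F_0$ injects via $\varphi$ into $A_B$, which, being an abelian finite subgroup of $\PGL_2(\kk)$, is either cyclic or isomorphic to $V_4$. If $L/F_0$ is cyclic, then $L=\langle F_0,l\rangle$ for some single $l\in L$ and is abelian. If $L/F_0\cong V_4$, then $L/Z(L)$ is an abelian non-cyclic quotient of $V_4$, forcing $L/Z(L)\cong V_4$ and $Z(L)=F_0$; the characteristic abelian subgroup $Z(L)\triangleleft G$ is then of index at most $4\cdot 6\cdot 60=1440<7200$ in $G$. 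Combining all cases yields $\J(\Aut(X,\pi))\leqslant 7200$.

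The most delicate step is controlling $[G:C_G(F_0)]$, because $|\Aut(\Cyc_n)|=\phi(n)$ is unbounded as $n$ varies: this is precisely where Serre's Lemma~\ref{lem: serr-lemma} is essential, forcing the conjugation action on a normal cyclic $F_0$ to lie in $\{\pm 1\}$. The sharp bound $7200$ is then dictated purely by the case $G_F\cong\Alt_5$, in which the characteristic-abelian trick is unavailable and one pays the full factor $|\Aut(\Alt_5)|=120$ instead of $6$; luckily, the triviality $Z(\Alt_5)=1$ ensures that $L$ is automatically abelian in that case, so no further class-$2$ correction is needed.
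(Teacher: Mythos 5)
Your proof is correct, but it takes a genuinely different route from the paper. The paper splits into four cases according to whether $G_F$ and $G_B$ are ``exceptional'' ($\Alt_4,\Sym_4,\Alt_5$) or cyclic/dihedral, works inside the subgroup $({G_F})_\bullet\langle\varphi(g)\rangle$ for a suitable $g$, and invokes the Chermak--Delgado theorem to upgrade an abelian subgroup of small index to a characteristic one; the constant $7200$ arises there as $2\cdot 60^2$ in the case where $G_F$ is exceptional and $G_B$ is cyclic or dihedral. You instead pull back a normal abelian $A_B\triangleleft G_B$ of index at most $60$, intersect with the centralizer of a characteristic abelian $F_0\subset G_F$ (Serre's Lemma~\ref{lem: serr-lemma} bounding $[G:C_G(F_0)]$ by $2$ for cyclic $F_0$, by $6$ for $V_4$, and by $|\Aut(\Alt_5)|=120$ when you are forced to take $F_0=G_F\cong\Alt_5$), and check directly that $L=H\cap C_G(F_0)$ is abelian or has abelian center $F_0$ of controlled index; your $7200$ is $60\cdot 120$. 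This avoids Chermak--Delgado entirely and yields explicit normal abelian subgroups of index at most $1440$ in every case except $G_F\cong\Alt_5$, so in that sense your bounds are finer; the paper's argument is shorter in the ``both exceptional'' case, where it simply bounds $|G|\leqslant 3600$.

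Two small points. First, the injection $L/F_0\hookrightarrow A_B$ uses that $\ker(\varphi|_L)=C_{G_F}(F_0)$ equals $F_0$, i.e.\ that $F_0$ is self-centralizing in $G_F$; this holds for $\Cyc_n\subset\Dih_n$ ($n\geqslant 3$) and for $V_4\subset\Alt_4,\Sym_4$, but it deserves a sentence. Second, your remark that the index $7200$ is ``attained'' in the $\Alt_5$ case is not literally correct: $[G:H]=60$ forces $G_B\cong\Alt_5$ and $A_B=1$, hence $H=G_F$ and $|G|=3600$, so your $L$ always has index at most $3600$ there. Neither point affects the upper bound $\J(\Aut(X,\pi))\leqslant 7200$, which is all the proposition asserts.
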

\begin{proof}
	Let $G\subset\Aut(X,\pi)$ be a finite group. We want to find a normal abelian subgroup $A\subset G$ such that $[G:A]\leqslant 7200$. In what follows, we shall call the groups $\Alt_4$, $\Sym_4$ and $\Alt_5$ ``exceptional''. We have the following possibilities for $G_F$ and $G_B$:
	\newline
	\newline
	{\bf (i) Both $G_F$ and $G_B$ are exceptional}. Then $|G|=|G_F|\cdot |G_B|\leqslant |\Alt_5|^2=3600$ and we are done. 
	\newline
	\newline
	{\bf (ii) The group $G_B$ is exceptional, and $G_F$ is either $\ZZ/n$ or $\Dih_n$,  $n\geqslant 2$}. Then one can take $A=G_F$ (if $G_F\cong\ZZ/n$) or $A=\Cyc_n\subset\Dih_n$ (if $G_F\cong\Dih_n$), and get $[G:A]\leqslant 2\cdot 60=120$ (recall that $\Cyc_n$ is characteristic in $\Dih_n$, hence is normal in $G$).
	\newline
	\newline
	{\bf (iii) The group $G_F$ is exceptional, and $G_B$ is either $\ZZ/n$ or $\Dih_n$}, $n\geqslant 2$. Take $g\in G$ such that $\varphi(g)$ generates a cyclic subgroup $G_B'\subset G_B$ with $[G_B:G_B']\leqslant 2$. Then one has 
	the inclusions 
	\[
	\langle g\rangle\subset ({G_F})_\bullet\langle \varphi(g)\rangle\subset G.
	\]
	Since the group $\langle g\rangle$ has index at most 60 in $({G_F})_\bullet\langle \varphi(g)\rangle$, the latter group contains a characteristic abelian subgroup $A$ of index at most 3600 (see Theorem \ref{rem: Chermak-Delgado}). As the group $({G_F})_\bullet\langle \varphi(g)\rangle$ has index at most $2$ in $G$, the group $A$ is normal and of index at most 7200 in $G$.
	\newline
	\newline
	{\bf (iv) Both $G_F$ and $G_B$ are cyclic or dihedral}. Assume first that $G_F$ is not isomorphic to $\Dih_2\cong(\ZZ/2)^2$. Then $G_F$ contains a cyclic characteristic subgroup $G_F'=\langle h\rangle$ of index at most 2. Similarly, $G_B$ contains a cyclic subgroup $G_B'$ of index at most 2. Pick $g\in G$ such that $\langle\varphi(g)\rangle=G_B'$. Since $G_F'$ is normal in $G$, it is normalized by $g$. Thus, by Lemma \ref{lem: serr-lemma}, either $ghg^{-1}=h$, or $ghg^{-1}=h^{-1}$. In both cases $g^2$ commutes with $h$, so the group $\langle g^2,h\rangle$ is abelian. One has the inclusions 
	\[
	\langle g^2,h\rangle\subset\langle g,h\rangle\subset ({G_F})_\bullet\langle \varphi(g)\rangle\subset G
	\]
	The index of $\langle g^2,h\rangle$ in $(G_F)_\bullet\langle \varphi(g)\rangle$ is at most 4. By Theorem \ref{rem: Chermak-Delgado} the latter group contains an abelian characteristic subgroup $A$ of index at most 16. Since $(G_F)_\bullet\langle \varphi(g)\rangle$ has index at most 2 in $G$, the group $A$ must be normal in $G$ and of index at most 32. 
	
	Finally, let us consider the case when $G_F\cong\Dih_2\cong(\ZZ/2)^2$ and $G_B\cong\ZZ/n$ or $\Dih_n$, $n\geqslant 2$. Again, take $g\in G$ such that $\varphi(g)$ generates $G_B'$ with $[G_B:G_B']\leqslant 2$. Then one has the inclusions 
	\[
	\langle g\rangle\subset ({G_F})_\bullet\langle \varphi(g)\rangle\subset G,
	\]
	so there exist a characteristic abelian subgroup $A$ of index at most 16 in $(G_F)_\bullet\langle \varphi(g)\rangle$. Since the latter group has index at most $2$ in $G$, the group $A$ is normal and of index at most 32 in $G$.
\end{proof}

The second type of rational surfaces we shall work with are del Pezzo surfaces, which by definition are projective algebraic surfaces $X$ with ample anticanonical class $-K_X$. 

\begin{prop}\label{prop: dP}
	Let $X$ be a smooth del Pezzo surface. Then
	\[
	\J(\Aut(X))\leqslant 7200.
	\]
	If $X$ is not isomorphic to $\PP^1\times\PP^1$, then one has $\J(\Aut(X))\leqslant 360$. 
\end{prop}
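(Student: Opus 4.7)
The plan is to traverse the classical classification of smooth del Pezzo surfaces by their degree $d=K_X^2\in\{1,\ldots,9\}$ (see, e.g., the Dolgachev--Iskovskikh survey of automorphisms) and bound $\J(\Aut(X))$ case by case, isolating $\PP^1\times\PP^1$ as the single surface responsible for the weaker bound $7200$.

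The high-degree cases are immediate. If $d=9$ then $X\cong\PP^2$ and $\Aut(X)=\PGL_3(\kk)$, so Lemma \ref{lem: J for simple varieties}(4) gives $\J=360$. If $d=8$ and $X$ is the Hirzebruch surface $\FF_1$, then $\Aut(X)$ is the stabiliser of a point in $\PGL_3(\kk)$, so $\J\leq 360$; similarly for $d=7$, where $X$ is $\PP^2$ blown up at two points and $\Aut(X)$ embeds into $\PGL_3(\kk)$. For $d=6$ the group $\Aut(X)=(\kk^*)^2\rtimes(\Sym_3\times\ZZ/2)$ has the torus as a normal subgroup, and any finite $G\subset\Aut(X)$ meets the torus in a normal abelian subgroup of index at most $12$. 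For $d=5$ we have $\Aut(X)\cong\Sym_5$, yielding $\J\leq 120$. For $d\in\{1,2,3,4\}$ the group $\Aut(X)$ is finite and is explicitly classified; in each surface one exhibits a normal abelian subgroup of index at most $360$ by direct inspection. The largest group arising is for the Fermat cubic ($d=3$), where $\Aut(X)\cong(\ZZ/3)^3\rtimes\Sym_4$ has order $648$ and the normal abelian subgroup $(\ZZ/3)^3$ has index $24$. Together these cases give $\J(\Aut(X))\leq 360$ for every $X\not\cong\PP^1\times\PP^1$.

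The remaining surface $X\cong\PP^1\times\PP^1$ realises the bound $7200$. Here $\Aut(X)=(\PGL_2(\kk)\times\PGL_2(\kk))\rtimes\ZZ/2$, with the $\ZZ/2$ interchanging the two factors. For a finite subgroup $G\subset\Aut(X)$, set $G_0=G\cap(\PGL_2\times\PGL_2)$ (index at most $2$ in $G$), and let $G_i\subset\PGL_2(\kk)$ denote the image of $G_0$ under the $i$-th projection, so that $G_0\subset G_1\times G_2$. Using the classification of finite subgroups of $\PGL_2(\kk)$, I choose for each $G_i$ a canonical characteristic abelian subgroup $A_i$ of index at most $60$ (for $G_i\cong\Dih_n$ with $n\geq 3$ take the maximal cyclic characteristic subgroup $\Cyc_n$; for $\Alt_4$ or $\Sym_4$ take the characteristic Klein four subgroup; for $\Alt_5$ take $A_i=\{1\}$; for cyclic or $(\ZZ/2)^2$ take $A_i=G_i$). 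The canonicity forces $gA_1g^{-1}=A_2$ for any $g\in G$ swapping the factors, so $A_1\times A_2$ is normalized by $G$. Hence $B:=G_0\cap(A_1\times A_2)$ is abelian and normal in $G$, and the injection $G_0/B\hookrightarrow(G_1/A_1)\times(G_2/A_2)$ together with $[G:G_0]\leq 2$ gives $[G:B]\leq 2\cdot 60^2=7200$.

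The main obstacle I anticipate is the book-keeping across the low-degree cases $d\in\{1,2,3,4\}$: the list of finite automorphism groups is long, and each one must be inspected individually to verify a normal abelian subgroup of index at most $360$, with the Fermat cubic and the degree-$2$ del Pezzo surfaces (where $|\Aut(X)|$ can reach $336$) requiring the most attention. A second delicate point is the canonicity of $A_i$ in the $\PP^1\times\PP^1$ argument: unless the characteristic abelian subgroups $A_i$ are chosen in a ``compatible'' way, the factor-swap need not preserve $A_1\times A_2$, and the $7200$ bound would fail.
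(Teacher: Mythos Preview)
Your overall strategy---case analysis by degree $d=K_X^2$---matches the paper's, and the cases $d\neq 8$ (and $X=\FF_1$) are handled essentially the same way; the paper is merely more explicit about $d\in\{1,2,3,4\}$, e.g.\ using the base point of $|-K_X|$ to embed $\Aut(X)\hookrightarrow\GL_2(\kk)$ when $d=1$, rather than appealing to a full classification.

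The genuine difference is in the $\PP^1\times\PP^1$ case. The paper splits into subcases according to the isomorphism type of $H\cong\pi_1(G_0)\cong\pi_2(G_0)$: if $H$ is exceptional it simply bounds $|G|\leqslant 2|\Alt_5|^2=7200$; if $H\cong\ZZ/n$ it takes $A=G_0$; if $H\cong\Dih_n$ it intersects $G_0$ with $\Cyc_n\times\Cyc_n$ to get an abelian subgroup of index at most $4$ in $G_0$, then invokes Chermak--Delgado (Theorem~\ref{rem: Chermak-Delgado}) to upgrade this to a \emph{characteristic} abelian subgroup of index at most $16$, hence normal of index at most $32$ in $G$; and when $G=G_0$ it falls back on Proposition~\ref{prop: CB}. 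Your argument is more uniform: by choosing each $A_i$ as an isomorphism-invariantly defined characteristic abelian subgroup of $G_i$ (the unique index-$2$ cyclic subgroup of $\Dih_n$ for $n\geqslant 3$, the unique normal Klein four in $\Alt_4$ and $\Sym_4$, etc.), any factor-swapping $g\in G$ automatically sends $A_1\times A_2$ to itself, so $B=G_0\cap(A_1\times A_2)$ is normal in $G$ without Chermak--Delgado and without appealing to the conic-bundle case. This is cleaner and self-contained, though it trades the paper's sharper constants in the non-exceptional subcases (index $\leqslant 32$) for the uniform bound $7200$. Your self-diagnosed worry about canonicity is exactly the right one, and your choices do resolve it.
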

\begin{proof}
	We shall consider each $d=K_X^2$ separately.
	\begin{description}
		\item[$d=9$] Then $X\cong\PP^2$ and $\J(\Aut(X))\leqslant 360$ by Lemma \ref{lem: J for simple varieties} (4).
		\item[$d=8$] If $X$ is a blow up $\pi: X\to\PP^2$ at one point, then $\pi$ is $\Aut(X)$-equivariant, so $\J(\Aut(X))\leqslant 360$ by the case $d=9$. Now let $X\cong\PP^1\times\PP^1$. Then
		\[
		\Aut(X)\cong\big (\PGL_2(\kk)\times\PGL_2(\kk)\big )\rtimes\ZZ/2.
		\]
		Let $G\subset\Aut(X)$ be a finite group and $G_0$ be its intersection with $\PGL_2(\kk)\times\PGL_2(\kk)$. We will assume that $G_0\ne 1$ (otherwise $G\cong\ZZ/2$ and everything is clear). Suppose first that $G\ne G_0$. Then $G_0$ is a normal subgroup of $G$ of index 2. Let
		\[
		\pi_i: \PGL_2(\kk)\times \PGL_2(\kk)\rightarrow\PGL_2(\kk),\ \ i=1,\ 2,
		\]
		be the projection on the $i$-th component. Then $G_0\subseteq\pi_1(G_0)\times\pi_2(G_0)$. Since $\ZZ/2$ acts on $\PGL_2(\kk)\times\PGL_2(\kk)$ by exchanging the two components, we have $\pi_1(G_0)\cong\pi_2(G_0)\cong H$, where $H$ is one of the following groups: $\ZZ/n$, $\Dih_n$, $\Alt_4$, $\Sym_4$, $\Alt_5$. In the last three cases we are done, since $|G|=2|G_0|\leqslant 2|\Alt_5|^2=7200$. If $H\cong\ZZ/n$, we take $G_0$ as a normal abelian subgroup of $G$ of index 2. Finally, if $H\cong\Dih_n$, then intersecting $G_0$ with $\Cyc_n\times\Cyc_n\subset\Dih_n\times\Dih_n$ we get an abelian subgroup $G_0'\subset G_0$ of index at most 4. Thus $G_0$ contains a characteristic abelian subgroup of index at most 16 (see e.g. Theorem \ref{rem: Chermak-Delgado}), which is normal in $G$ and has index at most 32 therein.
		
		If $G=G_0$ then we either conclude applying Proposition \ref{prop: CB} (as $G=G_0$ preserves the conic bundle structure), or use arguments similar to the previous ones (here we cannot state that $\pi_1(G)\cong\pi_2(G)$, but it is not so important). 
		\item[$d=7$] Then $X$ is a blow up $\pi: X\to\PP^2$ at two points, and $\pi$ is again $\Aut(X)$-equivariant. Therefore, $\J(\Aut(X))\leqslant 360$.
		\item[$d=6$] Then $X$ is isomorphic to the surface obtained by blowing up $\PP^2_\kk$ in three noncollinear points $p_1,p_2,p_3$. The set of $(-1)$-curves on $X$ consists of six curves which form a hexagon $\Sigma$ of lines in the anticanonical embedding $X\hookrightarrow\PP_\kk^6$. One can easily show that $\Aut(X)\cong(\kk^*)^2\rtimes\Dih_6$, where the torus $(\kk^*)^2$ comes from automorphisms of $\PP_\kk^2$ that fix all the points $p_i$, and $\Dih_6$ is the symmetry group of $\Sigma$. Therefore, $\J(\Aut(X))\leqslant|\Dih_6|=12$.
		\item[$d=5$] Then $\Aut(X)\cong\Sym_5$, so $\J(\Aut(X))=120$.
		\item[$d=4$] Then $\Aut(X)\cong (\ZZ/2)^4\rtimes\Gamma$, where $|\Gamma|\leqslant 10$, see \cite[Theorem 1.1]{hosoh-quart} or \cite[Theorem 8.6.8]{cag}. Thus $\J(\Aut(X))\leqslant 10$ and we are done.
		\item[$d=3$] Then either $|\Aut(X)|\leqslant 120$, or $\Aut(X)\cong (\ZZ/3)^3\rtimes\Sym_4$ (and $X$ is the Fermat cubic surface), see \cite[Theorem 5.3]{hosoh} or \cite[Theorem 9.5.8]{cag}. But in the latter case $\J(\Aut(X))\leqslant 24$.
		\item[$d=2$] Recall that the anticanonical map 
		$
		\psi_{|-K_X|}: X\rightarrow\PP_{\kk}^2
		$
		is a double cover branched over a smooth quartic $B\subset\PP_{\kk}^2$. It is well known that $\Aut(X)\cong\Aut(B)\times\langle\gamma\rangle$, where $\gamma$ is the Galois involution of the double cover (the {\it Geiser involution}). By Hurwitz's Theorem, $|\Aut(B)|\leqslant 84(g(B)-1)=168$, so $\J(\Aut(X))\leqslant |\Aut(X)|=336$.
		\item[$d=1$] Let $X$ be a del Pezzo surface of degree 1. Recall that the linear system $|-K_X|$ is an elliptic pencil whose base locus consists of one point, which we denote by $p$. Since $p$ is fixed by $\Aut(X)$, one has the natural faithful representation
		\[
		\Aut(X)\hookrightarrow\GL(T_pX)\cong\GL_2(\CC).
		\]
		Thus $\J(\Aut(X))\leqslant \J(\GL_2(\CC))=60$ by Lemma \ref{lem: J for simple varieties} (1).
	\end{description}
\end{proof}

\begin{sled}\label{sled: 1}
	Let $X$ be a smooth rational surface. Then $\J(\Aut(X))\leqslant 7200$.
\end{sled}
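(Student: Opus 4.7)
The plan is to reduce the statement to one of the two cases already handled by Propositions \ref{prop: CB} and \ref{prop: dP} by means of the $G$-equivariant Minimal Model Program. Fix a finite subgroup $G\subset\Aut(X)$; I need to exhibit a normal abelian subgroup $A\triangleleft G$ with $[G:A]\leqslant 7200$.

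The first step is to run $G$-equivariant MMP on $X$: since $X$ is a smooth rational surface, the program contracts successive $G$-orbits of $(-1)$-curves and produces a $G$-equivariant birational morphism $X\to Y$ onto a smooth $G$-minimal rational surface $Y$, with $G$ acting faithfully on $Y$. By the classical Enriques--Manin--Iskovskikh classification of $G$-minimal smooth rational surfaces (treated, for example, in the reference on $G$-varieties cited at the start of Section 3), one of the following two alternatives holds: either (a) $Y$ is a del Pezzo surface and $G\subset\Aut(Y)$, or (b) $Y$ admits a $G$-equivariant conic bundle structure $\pi\colon Y\to B$ with $G\subset\Aut(Y,\pi)$.

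In alternative (a), Proposition \ref{prop: dP} yields $\J(\Aut(Y))\leqslant 7200$, and so $G$ contains a normal abelian subgroup of index at most $7200$. In alternative (b), the rationality of $Y$ forces $B\cong\PP^1$, so Proposition \ref{prop: CB} applies and gives $\J(\Aut(Y,\pi))\leqslant 7200$. In either case the desired bound on $G$ follows, completing the proof.

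The only nontrivial ingredient here is the appeal to the $G$-equivariant MMP and the resulting dichotomy into del Pezzo surfaces and conic bundles; this is classical for smooth rational surfaces and requires no additional work. Everything else is a direct invocation of the two preceding propositions, so I do not expect a genuine obstacle.
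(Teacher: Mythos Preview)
Your proposal is correct and follows essentially the same approach as the paper: fix a finite $G\subset\Aut(X)$, run the $G$-equivariant Minimal Model Program to reduce to a $G$-minimal model that is either a del Pezzo surface or a $G$-conic bundle over $\PP^1$, and then invoke Propositions~\ref{prop: dP} and~\ref{prop: CB} respectively. The paper's proof is just a terser version of what you wrote, citing \cite[Theorem~5]{di-perf} for the dichotomy.
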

\begin{proof}
	Take a finite subgroup $G\subset\Aut(X)$. Applying to $X$ the $G$-Minimal Model Program, we may assume that $X$ is either a del Pezzo surface, or a rational surface with $G$-equivariant conic bundle structure \cite[Theorem 5]{di-perf}. Now the statement follows from Propositions \ref{prop: dP} and \ref{prop: CB}.
\end{proof}

\begin{sled}[Theorem \ref{thm: main}]\label{sled: 2}
	One has $\J(\Cr_2(\kk))=7200$.
\end{sled}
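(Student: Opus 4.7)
The plan is to prove the equality by two matching inequalities.

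For the upper bound $\J(\Cr_2(\kk)) \leq 7200$, I would invoke the standard regularization principle: every finite subgroup $G \subset \Cr_2(\kk)$ can be realized as a group of biregular automorphisms of some smooth projective rational surface $X$, by resolving indeterminacies of a chosen set of generators and then passing to a smooth $G$-equivariant model (this is a classical fact, already implicit in \cite{serre1}). Applying Corollary \ref{sled: 1} to such an $X$ then yields a normal abelian subgroup $A \triangleleft G$ with $[G:A] \leq 7200$.

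For the matching lower bound $\J(\Cr_2(\kk)) \geq 7200$, I would exhibit a single finite subgroup of order $7200$ whose only normal abelian subgroup is trivial. Since $\Alt_5 \subset \PGL_2(\kk)$ is the icosahedral subgroup, the group
\[
G \;=\; (\Alt_5 \times \Alt_5) \rtimes \langle\tau\rangle, \qquad |G| \;=\; 2 \cdot 60^2 \;=\; 7200,
\]
with $\tau$ acting by swapping the two factors, embeds into $\Aut(\PP^1 \times \PP^1) \cong \big(\PGL_2(\kk) \times \PGL_2(\kk)\big) \rtimes \ZZ/2$ and hence, after composing with any birational map $\PP^1 \times \PP^1 \dashrightarrow \PP^2$, into $\Cr_2(\kk)$. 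To check that $\J(G) = |G|$, suppose $A \triangleleft G$ is abelian. Then $A \cap (\Alt_5 \times \Alt_5)$ is normal in $\Alt_5 \times \Alt_5$; by Goursat's lemma together with simplicity of $\Alt_5$, the only normal subgroups of $\Alt_5 \times \Alt_5$ are $1$, $\Alt_5 \times 1$, $1 \times \Alt_5$, and the whole group, none of the nontrivial ones being abelian. So $A \cap (\Alt_5 \times \Alt_5) = 1$, and $A$ injects into $\langle\tau\rangle$. A nontrivial such $A$ would have to be central in $G$, contradicting the fact that $\tau$ acts nontrivially on $\Alt_5 \times \Alt_5$. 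Hence $A = 1$, and the Jordan constant of $G$ is $7200$.

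The only delicate point in this strategy is the regularization step; once $G$ is realized as a subgroup of $\Aut(X)$ for some rational surface $X$, the bound is inherited from Corollary \ref{sled: 1}, while the lower bound is pure finite group theory applied to a concretely described subgroup of $\Aut(\PP^1 \times \PP^1)$.
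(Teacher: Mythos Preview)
Your proof is correct and follows essentially the same route as the paper: regularize via \cite[Lemma~3.5]{di} and apply Corollary~\ref{sled: 1} for the upper bound, then exhibit $(\Alt_5\times\Alt_5)\rtimes\ZZ/2\subset\Aut(\PP^1\times\PP^1)$ for the lower bound, using the classification of normal subgroups of a product of two non-abelian simple groups. You are in fact slightly more thorough than the paper in ruling out a nontrivial abelian normal $A$ with $A\cap(\Alt_5\times\Alt_5)=1$ via the centrality argument; the paper leaves that last step implicit.
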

\begin{proof}
	Take a finite subgroup $G\subset\Cr_2(\kk)$. Regularizing its action (see \cite[Lemma 3.5]{di}), we may assume that $G$ acts biregularly on a smooth rational surface $X$. So, the bound $\J(\Cr_2(\kk))\leqslant 7200$ follows from Corollary \ref{sled: 1}. The equality is achieved for the group $G=(\Alt_5\times\Alt_5)\rtimes\ZZ/2$ acting on $\PP^1\times\PP^1$. Indeed, if $A$ is normal in $G$, then $A\cap (\Alt_5\times\Alt_5)$ is normal in $\Alt_5\times\Alt_5$. But every normal subgroup in the direct product of two simple non-abelian groups $H$ and $K$ is one of the groups $1_H\times 1_K,\ 1_H\times K,\ H\times 1_K,\ H\times K$. If $A$ is abelian, it must be trivial. 
\end{proof}

\section{The Jordan constants for the plane Cremona groups over $\RR$ and $\QQ$}

In recent years, growing attention has been paid to the group $\Cr_2(\RR)$. In contrast with $\Cr_2(\CC)$, there are only partial classification results for its finite subgroups at the moment (see \cite{Yas} for classification of odd order subgroups in $\Cr_2(\RR)$, and \cite{Robayo} for classification of prime order birational diffeomorphisms of $\Sph^2$). However we are still able to calculate the Jordan constant $\J(\Cr_2(\RR))$. As a by-product, we also get the Jordan constant for a closely related group $\Aut(\PP^2(\RR))$ of birational diffeomorphisms of $\PP_\RR^2$ (see Introduction), and for the group $\Cr_2(\QQ)$.

Of course, from Lemma \ref{lem: properties} (1) we immediately get
\[
\J(\Cr_2(\RR))\leqslant\J(\Cr_2(\CC))=7200.
\]
Using some elementary representation theory arguments (Lemma \ref{lem: PGL2 and PGL3}), this bound can be drastically improved. The next result is classical and we omit the proof.

\begin{lem}\label{lem: PGL2 and PGL3}
	The following assertions hold.
	\begin{enumerate}
		\item Any finite subgroup of $\GL_2(\RR)$ and $\PGL_2(\RR)$ is isomorphic either to $\ZZ/n$ or $\Dih_n$ ($n\geqslant 2$).
		\item One has $\PGL_3(\RR)\cong\SL_3(\RR)$. Any finite subgroup of $\PGL_3(\RR)$ is either cyclic, or dihedral, or one of the symmetry groups of Platonic solids $\mathfrak{A}_4$, $\mathfrak{S}_4$ or $\mathfrak{A}_5$.
	\end{enumerate}
\end{lem}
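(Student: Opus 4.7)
For part (1), the plan is to use the standard averaging trick. Given a finite subgroup $G\subset\GL_2(\RR)$, I would average any positive-definite inner product on $\RR^2$ over $G$ to produce a $G$-invariant one; this conjugates $G$ into the orthogonal group $\OO(2)$. Then I would invoke the classical (and entirely elementary) description of finite subgroups of $\OO(2)$: the identity component $\SO(2)$ is a circle, so any finite subgroup of it is cyclic, and a finite subgroup not contained in $\SO(2)$ is generated by its index-two cyclic rotation subgroup together with a reflection, hence dihedral. For $\PGL_2(\RR)$ I would instead use its action by isometries on the hyperbolic plane $\mathbb{H}^2$ (via the identity component $\PSL_2(\RR)$ together with an orientation-reversing involution). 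Since $\mathbb{H}^2$ is a complete $\mathrm{CAT}(0)$ space, any finite subgroup of isometries has a common fixed point (the circumcenter of an orbit, for instance), so its action linearizes on the tangent space at that point, embedding it into $\OO(2)$; the first step then applies.

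For part (2), I would first verify $\PGL_3(\RR)\cong\SL_3(\RR)$ directly. The map $\SL_3(\RR)\to\PGL_3(\RR)$ has kernel $\SL_3(\RR)\cap\RR^{*}I=\{cI: c^3=1\}=\{I\}$ because $3$ is odd, and it is surjective because for any $M\in\GL_3(\RR)$ the scalar $c=(\det M)^{-1/3}$ is real (again because $3$ is odd) and $cM\in\SL_3(\RR)$ represents the same class. After this identification, the averaging trick once more conjugates any finite subgroup $G\subset\SL_3(\RR)$ into $\SO(3)$ (orthogonal by averaging an inner product, and special orthogonal because $\det=1$ by assumption).

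It then remains to quote the classical Klein classification of finite subgroups of $\SO(3)$: cyclic, dihedral, or one of the three polyhedral groups $\Alt_4$, $\Sym_4$, $\Alt_5$. I would sketch this along the usual orbit-counting lines: analyse the action of $G$ on its set of poles (fixed points on $S^2$ of nontrivial elements), apply Burnside to the equation $2|G|-2=\sum_{\text{orbits}}(|G|-|G|/e_i)$, and enumerate the finitely many positive integer solutions.

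The only place requiring any real care is justifying the fixed-point argument for finite subgroups of $\PGL_2(\RR)$; the rest is genuinely a compilation of standard facts. An alternative route for part (1) that avoids $\mathrm{CAT}(0)$ geometry would be to embed $\PGL_2(\RR)\hookrightarrow\PGL_2(\CC)$, recall the Klein list of finite subgroups of $\PGL_2(\CC)$, and rule out $\Alt_4,\Sym_4,\Alt_5$ by showing that their unique faithful $2$-dimensional projective representations are not defined over $\RR$ (for instance, because any real embedding would give, by the argument above applied to the lift in $\SL_2(\RR)$ after averaging, an embedding of a binary polyhedral group into $\SU(2)\cap\SL_2(\RR)=\SO(2)$, which is cyclic — contradiction).
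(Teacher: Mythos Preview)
The paper does not actually prove this lemma: immediately before the statement it says ``The next result is classical and we omit the proof.'' So there is nothing to compare against on the paper's side.

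Your proposal is correct and is exactly the kind of standard argument the paper is implicitly appealing to. The averaging trick lands finite subgroups of $\GL_2(\RR)$ in $\OO(2)$ and finite subgroups of $\SL_3(\RR)$ in $\SO(3)$, after which the classical classifications apply; the identification $\PGL_3(\RR)\cong\SL_3(\RR)$ via the real cube root of the determinant is clean and correct. For $\PGL_2(\RR)$, your route through $\mathrm{Isom}(\mathbb{H}^2)$ and the Cartan/circumcenter fixed-point theorem is the most natural one, and the linearization at the fixed point is faithful because an isometry of $\mathbb{H}^2$ is determined by its $1$-jet at any point. Your alternative route (embed in $\PGL_2(\CC)$, lift the polyhedral candidates to binary polyhedral groups inside $\SL_2(\RR)$, then derive a contradiction from $\OO(2)\cap\SL_2(\RR)=\SO(2)$ being abelian) also works; the mention of $\mathrm{SU}(2)$ is not needed there, since averaging a \emph{real} inner product already puts you in $\OO(2)$, but the conclusion is the same.

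In short: nothing is missing, and since the paper gives no proof, your write-up would serve as a complete justification of the lemma.
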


\begin{prop}\label{prop: CB Real}
	Let $X$ be a smooth real $\RR$-rational surface with a conic bundle structure $\pi: X\to \PP_\RR^1$. Then \[\J(\Aut(X,\pi))\leqslant 32,\ \ \ \ \overline{\J}(\Aut(X,\pi))\leqslant 8.\]
\end{prop}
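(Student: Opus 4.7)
The plan is to mimic the case analysis from the proof of Proposition \ref{prop: CB}, exploiting the strong restrictions the real structure places on both $G_B$ and $G_F$. For a finite subgroup $G\subseteq\Aut(X,\pi)$, consider the usual exact sequence
\[
1\longrightarrow G_F\longrightarrow G\overset{\varphi}{\longrightarrow} G_B\longrightarrow 1.
\]
By Lemma \ref{lem: PGL2 and PGL3}(1), $G_B\subseteq\PGL_2(\RR)$ is either cyclic or dihedral, so the exceptional cases (i) and (ii) of Proposition \ref{prop: CB} cannot occur.

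The key additional input would be that $G_F$ is likewise cyclic or dihedral, thereby eliminating case (iii). The group $G_F$ acts faithfully on the generic fibre, a smooth conic over the formally real field $\RR(\PP_\RR^1)\cong\RR(t)$; over such a field the finite subgroups of the automorphism group of a (possibly anisotropic) smooth conic are cyclic or dihedral, by an argument parallel to that for Lemma \ref{lem: PGL2 and PGL3}(1). Alternatively, since $X(\RR)$ is nonempty ($X$ being $\RR$-rational) and $G$-invariant, analysing the $G_F$-action on the real locus of a generic fibre---which is empty, a point, or $\Sph^1$---yields the same dichotomy.

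With both $G_F$ and $G_B$ cyclic or dihedral, case (iv) of the proof of Proposition \ref{prop: CB} carries over verbatim. If $G_F\not\cong\Dih_2$, Lemma \ref{lem: serr-lemma} produces an abelian subgroup $\langle g^2,h\rangle$ of index at most $4$ in $(G_F)_\bullet\langle\varphi(g)\rangle$; if $G_F\cong\Dih_2$, the cyclic subgroup $\langle g\rangle$ serves the same purpose with the same index bound. Since $(G_F)_\bullet\langle\varphi(g)\rangle$ has index at most $2$ in $G$, this abelian subgroup has index at most $8$ in $G$, whence $\overline{\J}(\Aut(X,\pi))\leqslant 8$. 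Applying Theorem \ref{rem: Chermak-Delgado} inside $(G_F)_\bullet\langle\varphi(g)\rangle$ upgrades this to a characteristic abelian subgroup of index at most $16$ there, which is then normal of index at most $32$ in $G$, giving $\J(\Aut(X,\pi))\leqslant 32$.

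The hard part is justifying that $G_F$ is cyclic or dihedral. Naively one might try to complexify, but this fails: the anisotropic real conic $x_0^2+x_1^2+x_2^2=0$ has automorphism group $\SO(3)$, which contains $\Alt_4$, $\Sym_4$, and $\Alt_5$. One must instead genuinely exploit that the generic fibre lives over $\RR(t)$ rather than $\RR$---specifically, the absence of nontrivial roots of unity in $\RR(t)$ together with its formal reality---to rule out exceptional Platonic subgroups. Once this descent-type obstruction is handled, the rest is the purely group-theoretic bookkeeping transplanted from Proposition \ref{prop: CB}.
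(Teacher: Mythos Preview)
Your proposal is correct and follows essentially the same route as the paper: set up the exact sequence, observe that both $G_B$ and $G_F$ are cyclic or dihedral, and then transplant case~(iv) of Proposition~\ref{prop: CB} verbatim to obtain the index-$8$ abelian subgroup (giving $\overline{\J}\leqslant 8$) and, via Theorem~\ref{rem: Chermak-Delgado}, the index-$32$ normal abelian subgroup (giving $\J\leqslant 32$).

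The only difference is emphasis. The paper dispatches the claim that $G_F$ is cyclic or dihedral in one line, simply invoking Lemma~\ref{lem: PGL2 and PGL3}(1) after extending scalars to $\CC$ so as to sit inside the framework of Proposition~\ref{prop: CB}. You flag this as ``the hard part'' and worry that complexification alone is insufficient. Your worry is slightly misplaced: the complexification in the paper is only to reuse the set-up of Proposition~\ref{prop: CB}; the cyclic/dihedral conclusion for $G_F$ does not come from $\PGL_2(\CC)$ but from the real structure, exactly via the second route you yourself propose. Since $X$ is $\RR$-rational, $X(\RR)$ is nonempty and Zariski-dense, so for a general $b\in\PP^1(\RR)$ the fibre $X_b$ is smooth with $X_b(\RR)\ne\varnothing$, hence $X_b\cong\PP^1_\RR$; as $G_F$ fixes every fibre and acts faithfully on a general one, $G_F\hookrightarrow\Aut(\PP^1_\RR)=\PGL_2(\RR)$ and Lemma~\ref{lem: PGL2 and PGL3}(1) applies directly. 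Your anisotropic-conic example over $\RR$ is therefore irrelevant in the $\RR$-rational setting, and your first route (conics over formally real function fields) is unnecessary extra machinery. With that clarification, your argument and the paper's coincide.
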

\begin{proof}
	Let $G\subset\Aut(X,\pi)$ be a finite group. Extending scalars to $\CC$, we argue as in the proof of Proposition \ref{prop: CB}. Note that $G_F$ and $G_B$ cannot be ``exceptional'' groups by Lemma \ref{lem: PGL2 and PGL3} (1), so only the case (iv) from the proof of Proposition \ref{prop: CB} occurs. As the abelian group $\langle g^2,h\rangle$ (or $\langle g\rangle$, when $G_F\cong\Dih_2$) from this case has index at most 8 in $G$, we get the desired bound for the weak Jordan constant.
\end{proof}

We next consider real del Pezzo surfaces. For completeness sake, we also compute the weak Jordan constants for their automorphism groups. Note that for an algebraically closed field $\kk$ of characteristic 0 one has $\overline{\J}(\Aut X)\leqslant 288$ for every smooth del Pezzo surface $X$ over $\kk$, see \cite[Corollary 3.2.5]{PS-J3}.

\begin{prop}\label{prop: dP Real}
	Let $X$ be a smooth real $\RR$-rational del Pezzo surface. Then one has 
	\[
	\J(\Aut(X))\leqslant 120,\ \ \ \overline{\J}(\Aut(X))\leqslant 20.
	\]
\end{prop}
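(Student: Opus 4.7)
The plan is to follow the blueprint of Proposition~\ref{prop: dP} by stratifying on the anticanonical degree $d=K_X^2$, but to systematically feed in Lemma~\ref{lem: PGL2 and PGL3} at each step. Since $\Aut(X)\hookrightarrow\Aut(X_\CC)$, the complex classification pins down the ambient group, while the real classification of Lemma~\ref{lem: PGL2 and PGL3} rules out the large exceptional complex subgroups (such as $\Alt_6$, $\PSL_2(7)$, or $(\ZZ/3)^3\rtimes\Sym_4$ on the Fermat cubic) that cannot sit inside $\PGL_2(\RR)$ or $\PGL_3(\RR)$. The bound $120$ will be saturated at $d=5$, where $\Aut(X)\cong\Sym_5$: since $\Sym_5$ has no proper nontrivial normal abelian subgroup while its maximal abelian subgroup (generated for instance by $(12)(345)$, of order $6$) has index $20$, one gets $\J(\Sym_5)=120$ and $\overline{\J}(\Sym_5)=20$ on the nose.

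I would then sweep $d$ from $9$ down to $1$, arguing in each case that no finite subgroup of $\Aut(X)$ exceeds this benchmark. For $d=9$, $\Aut(X)=\PGL_3(\RR)$, and Lemma~\ref{lem: PGL2 and PGL3}(2) gives $\J\leq\J(\Alt_5)=60$ and $\overline{\J}\leq 12$. For $d=8$ I would split into the three $\RR$-rational forms: the blow-up of one real point contracts $\Aut$-equivariantly to $\PP^2_\RR$; for $\PP^1_\RR\times\PP^1_\RR$ one has $\Aut=(\PGL_2(\RR)\times\PGL_2(\RR))\rtimes\ZZ/2$, and the argument of Proposition~\ref{prop: CB Real} with both projections cyclic or dihedral gives $\J\leq 32$ and $\overline{\J}\leq 8$; for $\Quad_{3,1}$, $\Aut\subset\PO(3,1)$, whose finite subgroups are conjugate to subgroups of $\OO(3)$, so the bound is again $\J\leq 60$ and $\overline{\J}\leq 12$. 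Degree $d=7$ contracts equivariantly to $d=8$, and $d=6$ gives $\Aut\subseteq(\RR^*)^2\rtimes\Dih_6$ with $\J\leq 12$. The critical $d=5$ was treated above. For $d=4$ the complex structure $(\ZZ/2)^4\rtimes\Gamma$ with $|\Gamma|\leq 10$ is inherited and yields $\J\leq 10$. For $d=3$ the Fermat exception $(\ZZ/3)^3\rtimes\Sym_4$ collapses over $\RR$ (its diagonal cube roots of unity are not real), so one falls back on $|\Aut_\CC(X)|\leq 120$, saturated by the Clebsch diagonal cubic with $\Aut=\Sym_5$. For $d=2$ the Geiser involution splits $\Aut(X)=\Aut(B)\times\langle\gamma\rangle$ with $B\subset\PP_\RR^2$ a smooth plane quartic, and $\Aut(B)\subset\PGL_3(\RR)$ is cyclic, dihedral, $\Alt_4$, $\Sym_4$ or $\Alt_5$ by Lemma~\ref{lem: PGL2 and PGL3}(2), yielding at worst $\Alt_5\times\ZZ/2$ with $\J\leq 60$, $\overline{\J}\leq 12$. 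Finally, for $d=1$ the base point of $|-K_X|$ is real, so $\Aut(X)\hookrightarrow\GL_2(\RR)$ admits only cyclic or dihedral finite subgroups by Lemma~\ref{lem: PGL2 and PGL3}(1), giving $\J\leq 2$.

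The main obstacle is the case-by-case bookkeeping at the intermediate degrees $d=8,6,4,3$, where over $\CC$ the automorphism groups carry continuous parts and large finite extensions: one must verify that no finite subgroup surviving over $\RR$ exceeds the $\Sym_5$-benchmark set at $d=5$. In each of these cases the key is that the torus part of $\Aut_\CC$ contributes only very small abelian finite subgroups after restriction to a real form, and the discrete symmetry pieces are controlled ($\Dih_6$ at $d=6$, an extension by $\Gamma$ with $|\Gamma|\leq 10$ at $d=4$, and $\Sym_4$ at the real Fermat cubic). Once this bookkeeping is completed, every $d\neq 5$ gives strictly smaller bounds, and the universal estimates $\J(\Aut(X))\leq 120$ and $\overline{\J}(\Aut(X))\leq 20$ follow.
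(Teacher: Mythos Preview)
Your overall strategy matches the paper's: stratify by $d=K_X^2$, use $\Aut(X)\hookrightarrow\Aut(X_\CC)$ together with Lemma~\ref{lem: PGL2 and PGL3}, and identify $d=5$ as the extremal case. Most degrees are handled correctly, and a couple of your variants are even slightly cleaner than the paper's (e.g.\ for $\Quad_{3,1}$ you invoke the standard fact that finite subgroups of $\PO(3,1)$ are conjugate into $\OO(3)$, whereas the paper cites a classification reference).

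There is, however, a genuine gap at $d=3$. Your sentence ``the Fermat exception $(\ZZ/3)^3\rtimes\Sym_4$ collapses over $\RR$ \dots\ so one falls back on $|\Aut_\CC(X)|\leq 120$'' does not prove what is needed. The Fermat cubic \emph{is} defined over $\RR$, so you cannot discard this case and revert to the list with $|\Aut(X_\CC)|\leq 120$; you must bound $\overline{\J}(\Aut(X))$ for real forms $X$ with $X_\CC$ Fermat. The complex bound alone is insufficient, since the maximal abelian subgroup of $(\ZZ/3)^3\rtimes\Sym_4$ is $(\ZZ/3)^3$ itself, giving $\overline{\J}\big((\ZZ/3)^3\rtimes\Sym_4\big)=24>20$. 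Your parenthetical ``its diagonal cube roots of unity are not real'' shows only that for the \emph{standard} real form one has $\Aut(X)\cap(\ZZ/3)^3=\{1\}$; it says nothing about Galois twists of the Fermat, for which the intersection can be nontrivial. The paper closes this gap by observing that $\PGL_4(\RR)$ contains no copy of $(\ZZ/3)^3$, so $\Aut(X)\cap(\ZZ/3)^3\cong(\ZZ/3)^\ell$ with $\ell\leq 2$ for \emph{any} real form, and then noting that every homomorphism $\Sym_4\to\GL_\ell(\FF_3)$ has nontrivial kernel, which produces an abelian subgroup of index at most $12$ in $\Aut(X)$.

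A smaller omission in the same degree: the Hosoh/Dolgachev list for $d=3$ also contains the cases $\Aut(X_\CC)\cong\Heis_3(3)\rtimes\ZZ/4$ (order $108$) and $\Heis_3(3)\rtimes\ZZ/2$ (order $54$), which are not covered by ``$|\Aut_\CC(X)|\leq 120$ hence $\overline{\J}\leq 20$'' without further comment. These are easy (an order-$9$ abelian subgroup of $\Heis_3(3)$ gives index $\leq 12$), but they should be mentioned.
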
	
\begin{proof}
	We again consider each $d=K_X^2$ separately. Since $\J(\Aut(X))\leqslant\J(\Aut(X_\CC))$, in most cases it will be enough to get a sharper bound for $\J(\Aut(X_\CC))$, than in the proof of Proposition \ref{prop: dP}.
	\begin{description}
		\item[$d=9$] Then $X\cong\PP^2_\RR$ and $\J(\Aut(X))=|\Alt_5|=60$ by Lemma \ref{lem: PGL2 and PGL3} (2). Clearly, $\overline{\J}(\Aut(X))=12$.
		\item[$d=8$] If $X$ is the blow up $\pi: X\to\PP^2$ at one point, then every finite subgroup $G\subset\Aut(X)$ preserves the exceptional divisor of $\pi$ isomorphic to $\PP_\RR^1$. So, we conclude by Lemma \ref{lem: PGL2 and PGL3} (1). Now assume that $X_\CC\cong\PP_\CC^1\times\PP_\CC^1$. Denote by $\Quad_{r,s}$ the smooth quadric hypersurface 
		\[
		\{[x_1:\ldots :x_{r+s}]: x_1^2+\ldots +x_r^2-x_{r+1}^2-\ldots -x_{r+s}^2=0\}\subset\PP_\RR^{r+s-1}.
		\] 
		Then $X$ is either $\Quad_{3,1}$, or $\Quad_{2,2}$. In the first case $\Aut(X)\cong\PO(3,1)$. Indeed, any automorphism of $\Quad_{3,1}$ preserves the ample anticanonical class $-K_X$, hence is induced by an automorphism of the ambient projective space. Recall that 
		\[
		\OO(3,1)=\OO(3,1)^{\uparrow}\times\langle \pm I\rangle_2,
		\]
		where $I$ is the identity matrix and $\OO(3,1)^{\uparrow}$ is the subgroup preserving the future light cone. The latter group is isomorphic to $\PO(3,1)$ and we may identify subgroups of $\PO(3,1)$ with subgroups of $\OO(3,1)$. Using classification of finite subgroups of $\OO(3,1)$ given in \cite{lorentz}, we see that every finite group $G\subset\PO(3,1)$ contains a normal abelian subgroup of index at most 60 and abelian (not necessarily normal) subgroup of index at most 12.
		
		If $X\cong \Quad_{2,2}\cong\PP_\RR^1\times\PP_\RR^1$, then
		\[
		\Aut(X)\cong\big (\PGL_2(\RR)\times\PGL_2(\RR)\big )\rtimes\ZZ/2,
		\]
		and the assertion follows from Lemma \ref{lem: PGL2 and PGL3} (1) and the same arguments as in Proposition \ref{prop: dP} (case $d=8$).
		\item[$d=7$] Then $X$ is a blow up $\pi: X\to\PP^2$ at two points. One of $(-1)$-curves on $X$ is always defined over $\RR$ and $\Aut(X)$-invariant, so we again conclude by Lemma \ref{lem: PGL2 and PGL3} (1).
		\item[$d=6$] One has $\overline{\J}(\Aut(X))\leqslant \J(\Aut(X))\leqslant\J(\Aut(X_\CC))\leqslant|\Dih_6|=12$.
		\item[$d=5$] Then $\Aut(X_\CC)\cong\Sym_5$, so $\J(\Aut(X))\leqslant 120$ and $\overline{\J}(\Aut(X))\leqslant 20$. Note that there exists a real del Pezzo surface $X$ of degree 5 with $\Aut(X)\cong\Sym_5$ (it can be obtained by blowing up $\PP_\RR^2$ at 4 real points in general position). So, both bounds are sharp.
		\item[$d=4$] As we already noticed, $\Aut(X_\CC)\cong (\ZZ/2)^4\rtimes\Gamma$, where $|\Gamma|\leqslant 10$. Thus, \[\overline{\J}(\Aut(X))\leqslant \J(\Aut(X))\leqslant\J(\Aut(X_\CC))\leqslant|\Gamma|\leqslant 10.\]
		\item[$d=3$] From this moment we prefer to give more accurate bounds for $\J(\Aut(X))$. We will need these bounds in the proof of Theorem \ref{thm: Bir-Diff}, although not all of them are needed in the present proof. One has the following possibilities for $\Aut(X_\CC)$ (see \cite[Theorem 5.3]{hosoh} or \cite[Theorem 9.5.8]{cag}):
		\begin{itemize}
			\item $|\Aut(X_\CC)|=648$, $\Aut(X_\CC)\cong(\ZZ/3)^3\rtimes\Sym_4$ and $X_\CC$ is the Fermat cubic surface
			\[
			\big\{[x_0:x_1:x_2:x_3]: x_0^3+x_1^3+x_2^3+x_3^3=0\big \}\subset\PP^3.
			\]
			Therefore, $\J(\Aut(X))\leqslant |\Sym_4|=24$. Note that $\Aut(X)\cap(\ZZ/3)^3\cong(\ZZ/3)^\ell$, where $\ell=1,2$, as $\PGL_4(\RR)$ does not contain $(\ZZ/3)^3$ (see e.g. \cite[Proposition 2.17]{Yas}). Since every representation $\Sym_4\to\GL_\ell(\FF_3)$ has non-trivial kernel, $\Aut(X)$ contains an abelian subgroup of index at most 12. We conclude that $\overline{\J}(\Aut(X))\leqslant 12$.
			\item $|\Aut(X_\CC)|=120$ and $\Aut(X_\CC)\cong\Sym_5$. Thus $\J(\Aut(X))\leqslant\J(\Sym_5)=120$ and $\overline{\J}(\Aut(X))\leqslant 20$.
			\item $|\Aut(X_\CC)|=108$ and $\Aut(X_\CC)\cong\Heis_3(3)\rtimes\ZZ/4$, where $\Heis_3(3)$ is the Heisenberg group of unipotent $3\times 3$-matrices with entries in $\mathbb{F}_3$. Being a group of order 27, the Heisenberg group $\Heis_3(3)$ has non-trivial center, which must be a normal subgroup of $\Aut(X_\CC)$. Therefore, $\J(\Aut(X))\leqslant 108/3=36.$ On the other hand, $\Heis_3(3)$ contains an abelian subgroup of order 9, so $\overline{\J}(\Aut(X))\leqslant 108/9=12$.
			\item $|\Aut(X_\CC)|=54$ and $\Aut(X_\CC)\cong\Heis_3(3)\rtimes\ZZ/2$. Similarly, one has $\overline{\J}(\Aut(X))\leqslant 54/9=6$.
			\item $|\Aut(X_\CC)|\leqslant 24$. Then every non-trivial cyclic subgroup of $\Aut(X)$ has index at most $12$.
		\end{itemize}
		\item[$d=2$] Just as in the proof of Proposition \ref{prop: dP}, one has $\Aut(X)\cong\Aut(B)\times\langle\gamma\rangle$, where $\gamma$ is the Geiser involution and $B$ is a real plane quartic curve. Since $\Aut(B)\subset\PGL_3(\RR)$ is a finite group, we can apply Lemma \ref{lem: PGL2 and PGL3} (2). Namely, if $\Aut(B)$ is ``exceptional'', take $A=\langle\gamma\rangle$ as a desired normal abelian subgroup. If $\Aut(B)\cong\ZZ/n$, take $A=\Aut(X)$. If $\Aut(B)\cong\Dih_n$, take $A=C_n\times\langle\gamma\rangle$. In all the cases $\J(\Aut(X))\leqslant 60$ and $\overline{\J}(\Aut(X))\leqslant 12$.
		
		\item[$d=1$] Since the unique base point $p$ of the linear system $|-K_X|$ must be real, we again have the natural faithful representation
		\[
		\Aut(X)\to\GL(T_pX)\cong\GL_2(\RR).
		\]
		Thus $\Aut(X)$ is either cyclic, or dihedral, and $\J(\Aut(X))\leqslant 2$.
		\end{description}
\end{proof}

\begin{sled}[Theorem \ref{thm: J for Real Cremona}]
	One has
	\[
	\J(\Cr_2(\RR))=120,\ \ \ \ \ \overline{\J}(\Cr_2(\RR))=20.
	\]
\end{sled}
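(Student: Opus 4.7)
The plan mirrors the proof of Corollary \ref{sled: 2}, but uses the real versions of the conic bundle and del Pezzo estimates. Given a finite subgroup $G\subset\Cr_2(\RR)$, I first regularize its action (cf.\ \cite[Lemma 3.5]{di}, which works over any field) so that $G$ acts biregularly on a smooth $\RR$-rational projective surface $X$. Running the $G$-equivariant Minimal Model Program over the perfect field $\RR$ reduces to the case when $(X,G)$ is a $G$-Mori fibre space, i.e. $X$ is either a smooth real del Pezzo surface or carries a $G$-equivariant conic bundle structure $\pi\colon X\to B$. Since $X$ is $\RR$-rational, the base $B$ has a real point and hence is isomorphic to $\PP^1_\RR$, so Proposition \ref{prop: CB Real} applies.

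Combining Propositions \ref{prop: CB Real} and \ref{prop: dP Real} in the two cases gives
\[
\J(\Cr_2(\RR))\leqslant\max(120,32)=120,\qquad \overline{\J}(\Cr_2(\RR))\leqslant\max(20,8)=20.
\]

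For the lower bounds I would use the sharp example already exhibited in the case $d=5$ of Proposition \ref{prop: dP Real}: the real del Pezzo surface of degree $5$ obtained by blowing up $\PP^2_\RR$ at four real points in general position has $\Aut(X)\cong\Sym_5$, and yields a subgroup of $\Cr_2(\RR)$ isomorphic to $\Sym_5$. The only non-trivial proper normal subgroup of $\Sym_5$ is the simple group $\Alt_5$, so the only normal abelian subgroup of $\Sym_5$ is trivial, forcing $\J(\Sym_5)=120$. For the weak constant one observes that every abelian subgroup of $\Sym_5$ is generated by commuting cycles, and a short case analysis shows its order is at most $6$, realised by $\langle(12)(345)\rangle\cong\ZZ/6$; hence $\overline{\J}(\Sym_5)=120/6=20$.

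The main potential obstacle is making sure the $G$-MMP output over $\RR$ really lands in one of the two geometric classes treated by the preceding propositions, i.e.\ that the conic bundle case can be assumed to have base $\PP^1_\RR$ and that the del Pezzo case is indeed $\RR$-rational. Both are handled by the standard Dolgachev--Iskovskikh-type theorem over perfect fields together with the observation that $\RR$-rationality forces $B(\RR)\neq\emptyset$. Beyond this, everything is a direct application of the already-proved real analogues together with the elementary finite-group facts about $\Sym_5$.
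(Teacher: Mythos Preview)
Your proposal is correct and follows essentially the same approach as the paper: regularize, run the $G$-MMP over $\RR$, apply Propositions \ref{prop: CB Real} and \ref{prop: dP Real} for the upper bounds, and use $\Sym_5$ on the degree~$5$ real del Pezzo surface for the lower bounds. You are in fact slightly more careful than the paper in two places---you justify why the conic bundle base is $\PP^1_\RR$ and you spell out the elementary group-theoretic computations $\J(\Sym_5)=120$, $\overline{\J}(\Sym_5)=20$---whereas the paper leaves these implicit.
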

\begin{proof}
	Let $G\subset\Cr_2(\RR)$ be a finite subgroup. Regularizing its action on some $\RR$-rational surface and applying the $G$-Minimal Model Program, we may assume that $G$ acts biregularly on a smooth real $\RR$-rational surface, which is either a del Pezzo surface, or a $G$-equivariant conic bundle \cite[Theorem 5]{di-perf}. From Propositions \ref{prop: CB Real} and \ref{prop: dP Real}, one gets $\J(\Cr_2(\RR))\leqslant 120$ and $\overline{\J}(\Cr_2(\RR))=20$. The equalities are given by the group $\Sym_5$, which occurs as the automorphism group of a real del Pezzo surface, obtained by blowing up $\PP_\RR^2$ at four real points in general position.
\end{proof}

\begin{sled}[Theorem \ref{thm: J for Q Cremona}]
	One has
	\[
	\J(\Cr_2(\QQ))=120,\ \ \ \ \ \overline{\J}(\Cr_2(\QQ))=20.
	\]
\end{sled}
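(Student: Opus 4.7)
The strategy is exactly parallel to the real case, with one ingredient being purely formal (the upper bound via inclusion) and the other being a rationality check on a known construction.

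For the upper bounds, observe that every $\QQ$-birational self-map of $\PP^2_{\QQ}$ extends to an $\RR$-birational self-map of $\PP^2_{\RR}$, so there is a natural injection $\Cr_2(\QQ) \hookrightarrow \Cr_2(\RR)$. Applying Lemma \ref{lem: properties}(1) and Theorem \ref{thm: J for Real Cremona} yields at once
\[
\J(\Cr_2(\QQ))\leqslant \J(\Cr_2(\RR))=120, \qquad \overline{\J}(\Cr_2(\QQ))\leqslant \overline{\J}(\Cr_2(\RR))=20.
\]

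For the matching lower bounds, I would exhibit a copy of $\Sym_5$ inside $\Cr_2(\QQ)$, just as in the real case, and then observe that $\J(\Sym_5)=120$ and $\overline{\J}(\Sym_5)=20$. The first of these holds because the only nontrivial normal subgroups of $\Sym_5$ are $\Alt_5$ and $\Sym_5$, neither of which is abelian, so the trivial subgroup is the largest normal abelian subgroup; the second holds because the largest abelian subgroup of $\Sym_5$ has order $6$ (e.g.\ $\langle (12)(345)\rangle$), giving index $120/6=20$.

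To build the required copy of $\Sym_5$, let $X$ be the blowup of $\PP^2_{\QQ}$ at four $\QQ$-rational points in general position. Then $X$ is a del Pezzo surface of degree $5$ defined over $\QQ$, and it is $\QQ$-rational by construction, so $\Aut(X)\subset\Cr_2(\QQ)$. Over an algebraic closure, $\Aut(X_{\bar\QQ})\cong\Sym_5$; one realizes this by combining the obvious $\Sym_4$ permuting the four blown-up points with a standard quadratic Cremona transformation based at three of those four points, which descends to an automorphism of $X$ because the fourth point is in general position. Both the permutations and the Cremona transformation are defined over $\QQ$, since all four base points are $\QQ$-rational, so the whole $\Sym_5$ is in fact contained in $\Aut(X)$ and hence in $\Cr_2(\QQ)$.

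The main (minor) obstacle is precisely this descent check — verifying that the "extra" automorphisms of the quintic del Pezzo surface beyond the permutations of the exceptional divisors survive over $\QQ$. Once this is granted, the two inequalities are saturated by $\Sym_5$ and the theorem follows.
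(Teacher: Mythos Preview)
Your proposal is correct and follows exactly the same approach as the paper: the upper bound comes from the inclusion $\Cr_2(\QQ)\hookrightarrow\Cr_2(\RR)$ together with Theorem~\ref{thm: J for Real Cremona}, and the lower bound comes from realizing $\Sym_5$ as the automorphism group of a degree~5 del Pezzo surface over~$\QQ$ obtained by blowing up four rational points in general position. You supply more detail than the paper (the explicit computation of $\J(\Sym_5)$ and $\overline{\J}(\Sym_5)$, and the descent argument for the quadratic Cremona transformation), but the strategy is identical.
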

\begin{proof}
	Clearly, $\J(\Cr_2(\QQ))\leqslant \J(\Cr_2(\RR))$, $\overline{\J}(\Cr_2(\QQ))\leqslant\overline{\J}(\Cr_2(\RR))$. Since $\Sym_5$ can be realized as the automorphism group of a degree 5 del Pezzo surface over $\QQ$, we are done.
\end{proof}

\begin{sled}[Theorem \ref{thm: Bir-Diff}]
	One has
	\begin{align*}
	\J\big(\Aut(\PP^2(\RR))\big)=60,&\ \ \ \ \ \ \overline{\J}\big(\Aut(\PP^2(\RR))\big)=12,  \\
	\J(\Quad_{3,1}(\RR))=60,&\ \ \ \ \ \ \overline{\J}(\Quad_{3,1}(\RR))=12.
	\end{align*}
\end{sled}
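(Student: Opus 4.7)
The plan is to follow exactly the same scheme as the computation of $\J(\Cr_2(\RR))$, but to upgrade it with a topological constraint on the real locus. For the lower bounds $\J\geqslant 60$ and $\overline{\J}\geqslant 12$ in both assertions, I would exhibit an explicit copy of $\Alt_5$: for $\Aut(\PP^2(\RR))$ it arises from the inclusion $\PGL_3(\RR)\subset\Aut(\PP^2(\RR))$ via the $3$-dimensional irreducible representation of the icosahedral group, and for $\Aut(\Quad_{3,1}(\RR))$ from the rotation action on $\Sph^2$ living inside $\PO(3,1)\subset\Aut(\Quad_{3,1}(\RR))$. Every abelian subgroup of $\Alt_5$ has order at most $5$, giving $\J(\Alt_5)=60$ and $\overline{\J}(\Alt_5)=12$, and this transfers as a lower bound to the ambient groups.

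For the upper bounds I would, starting from a finite subgroup $G\subset\Aut(\PP^2(\RR))$ (respectively $G\subset\Aut(\Quad_{3,1}(\RR))$), regularize the action in such a way as to obtain a biregular action on a smooth real $\RR$-rational surface $X$ with $X(\RR)$ diffeomorphic to $\PP^2(\RR)$ (resp.\ to $\Sph^2$), and so that the birational map $X\dashrightarrow\PP^2_\RR$ (resp.\ $X\dashrightarrow\Quad_{3,1}$) is a birational diffeomorphism. Then I would run a $G$-equivariant MMP in the category of birational diffeomorphisms, i.e.\ every contracted extremal divisor is required to avoid $X(\RR)$; this preserves the topological type of the real locus. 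The output $Y$ is a $G$-minimal surface, and is either a del Pezzo surface or a $G$-equivariant conic bundle over $\PP^1_\RR$, with $Y(\RR)$ still diffeomorphic to $\PP^2(\RR)$ (resp.\ to $\Sph^2$).

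The core of the proof is then a topological filtration of the possible $G$-minimal models. The real locus of a smooth real conic bundle $Y\to\PP^1_\RR$ is a finite disjoint union of tori, Klein bottles and spheres, and in particular it cannot be a single copy of $\PP^2(\RR)$, nor a single $\Sph^2$ unless $Y\cong\Quad_{3,1}$ (since a sphere component arises only from a conic bundle with non-trivial Sarkisov structure equivalent to $\Quad_{3,1}$). Among real del Pezzo surfaces, a case-by-case inspection as in the proof of Proposition \ref{prop: dP Real} shows that the only one with $Y(\RR)$ connected and diffeomorphic to $\PP^2(\RR)$ is $\PP^2_\RR$ itself, and the only one with $Y(\RR)$ connected and diffeomorphic to $\Sph^2$ is $\Quad_{3,1}$. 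Consequently $G$ embeds into $\Aut(\PP^2_\RR)=\PGL_3(\RR)$ in the first situation and into $\Aut(\Quad_{3,1})=\PO(3,1)$ in the second, and the required bounds $\J\leqslant 60$ and $\overline{\J}\leqslant 12$ follow from Lemma \ref{lem: PGL2 and PGL3}~(2), together with the classification of finite subgroups of $\PO(3,1)$ already invoked in the case $d=8$ of Proposition \ref{prop: dP Real}.

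The main obstacle will be the justification that the $G$-equivariant MMP can indeed be performed in the category of birational diffeomorphisms so as to preserve $X(\RR)$ up to diffeomorphism. Concretely, one must show that any $G$-invariant extremal divisor occurring in the MMP can be assumed disjoint from the real locus (otherwise its contraction would collapse a real curve to a point and change the real topology), and then that the resulting minimal model is indeed of one of the two topological types listed above. For this I would appeal to real-algebraic results of Kollár-Mangolte type on the structure of real rational surfaces and of their conic bundle / del Pezzo minimal models, together with the explicit list of real minimal rational surfaces, ruling out in turn all surfaces whose real locus is disconnected or has the wrong diffeomorphism type.
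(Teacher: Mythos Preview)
Your lower-bound argument via $\Alt_5$ is fine and matches the paper. The gap is in the upper bound: your central claim, that the only $G$-minimal real del Pezzo surface with $Y(\RR)\approx\RR\PP^2$ is $\PP^2_\RR$ itself (and similarly that $\Quad_{3,1}$ is the unique model with $Y(\RR)\approx\Sph^2$), is false. Blowing up $\PP^2_\RR$ in $k$ pairs of complex conjugate points, $1\leqslant k\leqslant 4$, yields del Pezzo surfaces of degrees $7,5,3,1$ whose real loci are still diffeomorphic to $\RR\PP^2$ (the exceptional curves carry no real points), and for suitable $G$ these surfaces are $G$-minimal. Likewise one obtains $G$-minimal del Pezzo surfaces of degrees $6,4,2$ with real locus $\Sph^2$, and there are also genuine $G$-minimal conic bundles with real locus $\Sph^2$ that are not isomorphic to $\Quad_{3,1}$. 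Your proposed MMP ``in the category of birational diffeomorphisms'' does not rescue the argument: the contractions in question already avoid the real locus, so the obstruction to reaching $\PP^2_\RR$ or $\Quad_{3,1}$ is purely $G$-equivariance, which you cannot force.

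The paper proceeds differently. Invoking Koll\'ar's classification \cite[Corollary 3.4]{kol}, it lists all possible $G$-minimal models with the prescribed real locus (odd degrees $1,3,5,7,9$ for $\RR\PP^2$; even degrees $2,4,6,8$ and conic bundles for $\Sph^2$) and then applies the case-by-case bounds of Propositions~\ref{prop: dP Real} and~\ref{prop: CB Real}. These already give $\J\leqslant 60$ and $\overline{\J}\leqslant 12$ in every case except $d=5$ and $d=3$ with $\Aut(X_\CC)\cong\Sym_5$. The actual content you are missing is the elimination of these two cases: for $d=5$ one observes that such a real form has only two real $(-1)$-curves, too few for an $\Sym_5$-action on the Petersen graph; for $d=3$ one shows that an element of order $5$ in $\PGL_4(\RR)$ stabilising each of the three real lines on $X$ would be forced to fix a plane pointwise, hence be a reflection, a contradiction.
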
 
\begin{proof}
	Take a finite subgroup $G\subset\Aut(\PP^2(\RR))$ and regularize its action on some smooth $\RR$-rational surface $X$. As above, we can assume that $X$ is $G$-minimal and is either a del Pezzo surface, or a surface with $G$-equivariant conic bundle structure. Moreover, since we want $X(\RR)$ to be homeomorphic to $\RR\PP^2$, we may assume by \cite[Corollary 3.4.]{kol} that $X$ is isomorphic to $\PP_\RR^2$ blown up at $k$ pairs of complex conjugate points, where $k=0,\ldots,4$ and $d=K_X^2=9-2k$. From the proof of Proposition \ref{prop: dP Real}, one easily gets that $\J(\Aut(X))\leqslant 60$ and $\overline{\J}(\Aut(X))\leqslant 12$ in all the cases, except $d=k=3,\ \Aut(X_\CC)\cong\Sym_5$, and $d=5,\ k=2,\ \Aut(X_\CC)\cong\Sym_5$. To conclude that $\J(\Aut(\PP^2(\RR)))=60$ and $\overline{\J}(\Aut(\PP^2(\RR)))=12$, it suffices to show that $\Sym_5$ cannot occur as the automorphism group of such real surfaces.
	
	If $d=5$, then $\Sym_5$ is the automorphism group of the Petersen graph of $(-1)$-curves on $X_\CC$. In our case there are only 2 real lines on $X$, so $\Aut(X)$ cannot be isomorphic to $\Sym_5$. 
	
	Assume that $d=k=3$, $\Aut(X)\cong\Sym_5$ and let $\tau\in\Aut(X)\subset\PGL_4(\RR)$ be of order 5. It is easy to see that there are exactly 3 real lines $\ell_1,\ \ell_2$ and $\ell_3$ on $X$, either forming a ``triangle'' or intersecting at a single Eckardt point. Thus $\tau$ preserves each $\ell_i$. In both cases $\tau$ stabilizes a real line, say $\ell_1$, and fixes a real point $p=\ell_1\cap\ell_2$. Restricting $\tau$ to $\ell_1$, we get an automorphism $\tau'$ of $\PP^1_\RR$ with a real fixed point. Thus either $\tau'$ has order 2, which is impossible, or $\tau$ fixes $\ell_1$ pointwise. In the latter case $\tau$ fixes $\ell_2$ pointwise too, so $\tau$ fixes pointwise the plane in $\PP_\RR^3$ spanned by $\ell_1$ and $\ell_2$. So, $\tau$ is a reflection, a contradiction.
	
	Similarly, given a finite subgroup $G\subset\Aut(\Quad_{3,1}(\RR))$, we may assume that $G$ acts biregularly either on a smooth $\RR$-rational conic bundle $X$, or on an $\RR$-rational del Pezzo surface $X$. In the former case we are done by Proposition \ref{prop: CB Real}. In the latter case, to preserve the real locus structure, the degree of $X$ should be 8, 6, 4, or 2. From the proof of Proposition \ref{prop: dP Real}, we see that $\J(\Aut(X))\leqslant 60$ and $\overline{\J}(\Aut(X))\leqslant 12$ in these cases. As usual, the equality is given by the group $\Alt_5$ acting on $\Quad_{3,1}$. 
\end{proof}

\def\bibindent{2.5em}


\begin{thebibliography}{99\kern\bibindent}
	\bibitem[Bir16]{BAB} C. Birkar, {\it Singularities of linear systems and boundedness of Fano varieties}, \href{http://arxiv.org/abs/1609.05543}{http://arxiv.org/abs/1609.05543}.
	\bibitem[Col07]{J for Gl} M. J. Collins, {\it On Jordans theorem for complex linear groups}, J. Group Theory, 10 (2007), 411--423.
	\bibitem[CR62]{CR} Charles W. Curtis and Irving Reiner, {\it Representation theory of finite groups and associative algebras}, Pure and Applied Mathematics, Vol. XI. Interscience Publishers, a division of John	Wiley \& Sons, New York-London, 1962.
	\bibitem[DI09a]{di} I.\ V.\ Dolgachev, V.\ A.\ Iskovskikh, {\it Finite subgroups of the plane Cremona group}, Algebra, arithmetic, and geometry: in honor of Yu. I. Manin, Progr. Math., vol. 269 (2009), Birkhauser Boston, Inc., Boston, MA., 443--558.
	\bibitem[DI09b]{di-perf} I.\ V.\ Dolgachev, V.\ A.\ Iskovskikh, {\it On elements of prime order in the plane Cremona group over a perfect field}, Int. Math. Res. Notices (2009), no. 18, 3467--3485.
	\bibitem[Dol12]{cag} I.\ V.\ Dolgachev, {\it Classical Algebraic Geometry: A Modern View}, Cambridge University Press, 1st edition, (2012).
	\bibitem[Hos96]{hosoh-quart} T. Hosoh, {\it Automorphism Groups of Quartic del Pezzo Surfaces}, J. Algebra 185(2) (1996), 374--389.
	\bibitem[Hos97]{hosoh} T. Hosoh, {\it Automorphism groups of cubic surfaces}, J. Algebra 192(2) (1997), 651--677.
	\bibitem[Isa08]{Isaacs} I. Martin Isaacs, {\it Finite group theory}, volume 92 of Graduate Studies in Mathematics, American Mathematical Society, Providence, RI, 2008.
	\bibitem[Kol97]{kol} J. Koll\'{a}r, {\it Real Algebraic Surfaces}, Notes of the 1997 Trento summer school lectures, (preprint).
	\bibitem[PSA80]{lorentz} J. Patera, Y. Saint-Aubin, {\it Finite Subgroups of the Lorentz Group and their Generating Functions}, Symmetries in Science, Springer (1980), 297--308.
	\bibitem[Pop11]{Pop-Makar} V. L. Popov, {\it On the Makar-Limanov, Derksen invariants, and finite automorphism groups of algebraic varieties}, Affine algebraic geometry: the Russell Festschrift, CRM Proceedings and Lecture Notes, 54, Amer. Math. Soc. (2011), 289--311.
	\bibitem[Pop15]{Pop-Diff} V. L. Popov, {\it Finite subgroups of diffeomorphism groups}, Proc. Steklov Inst. Math., 289 (2015), 221--226.
	\bibitem[PS16a]{PS-BAB} Yu. Prokhorov and C. Shramov, {\it Jordan property for Cremona groups}, Amer. J. Math., 138(2) (2016), 403--418.
	\bibitem[PS16b]{PS-J3} Yu. Prokhorov and C. Shramov, {\it Jordan constant for Cremona group of rank 3}, \href{http://arxiv.org/abs/1608.00709}{http://arxiv.org/abs/1608.00709}.
	\bibitem[Rob16]{Robayo} M. F. Robayo, {\it Prime order birational diffeomorphisms of the sphere}, Annali Sc. Norm. Super. Pisa, Cl. Sci. (5) Vol. XVI (2016), 909--970.
	\bibitem[Ser08]{serre2} J.-P. Serre, {\it Le groupe de Cremona et ses sous-groupes finis}, Seminaire Bourbaki, no. 1000 (2008), 75--100.
	\bibitem[Ser09]{serre1} J.-P. Serre, {\it A Minkowski-style bound for the orders of the finite subgroups of the Cremona group of rank 2 over an arbitrary field}, Mosc. Math. J. 9:1 (2009), 183--198.
	\bibitem[Tre14]{Trep} A. Trepalin, {\it Rationality of the quotient of $\PP^2$ by finite group of automorphisms over arbitrary field of characteristic zero}, Central European Journal of Mathematics, vol. 12 (2), (2014), 229--239.
	\bibitem[Yas16]{Yas} E. Yasinsky, {\it Subgroups of odd order in the real plane Cremona group}, J. of Algebra 461 (2016), 87--120.
	
\end{thebibliography}
\end{document}